\documentclass[letterpaper,12pt]{article}
\usepackage{braket}
\usepackage{diagbox}
\usepackage{pdflscape} 
\usepackage{adjustbox}

\usepackage{tabularx} 
\usepackage{amsmath,amsthm,amssymb}  
\usepackage{amsmath}
\usepackage{amsfonts}
\usepackage{amssymb}
\usepackage{graphicx} 
\usepackage[margin=0.8in,letterpaper]{geometry} 
\usepackage{cite} 
\usepackage[english]{babel}
\usepackage{caption}
\usepackage{float}
\usepackage[table]{xcolor}
\usepackage{array}
\usepackage{algorithm}
\usepackage{algpseudocode}
\usepackage{cases}
\usepackage{titlesec}
\usepackage{tikz}
\usepackage[utf8]{inputenc}
\usepackage[T1]{fontenc}
\usepackage{multirow}
\usepackage{hyperref}

\usepackage{rotating}
\usepackage{csquotes}
 
\newtheorem{theorem}{Theorem}[section]

\newtheorem{proposition}[theorem]{Proposition}

\newtheorem{corollary}[theorem]{Corollary}

\usepackage{microtype}      
\usepackage{lipsum}
\usepackage{nicefrac}
\usepackage{fancyhdr}       

\DeclareMathOperator{\cp}{\,\square\,}

\usepackage[pagewise,mathlines]{lineno}

\author{
El-Mehdi Mehiri$^{a,}$\footnote{Corresponding author. Email: elmehdi.mehiri@emse.fr or  mehiri314@gmail.com}  \and Sandi Klav{\v{z}}ar$^{b,c,d,}$\footnote{Email: sandi.klavzar@fmf.uni-lj.si}
}

\title{\textbf{M-Polynomial of Product Graphs}}

\date{\today}

\begin{document}

\maketitle

\begin{center}
{\small
$^{a}$ Mines Saint-Étienne, CMP, Department of Manufacturing Sciences and Logistics, 
F-13120 Gardanne, France. \\[4pt]
$^{b}$ Faculty of Mathematics and Physics, University of Ljubljana, Slovenia.\\[4pt]
$^{c}$ Institute of Mathematics, Physics and Mechanics, Ljubljana, Slovenia.\\[4pt]
$^{d}$ Faculty of Natural Sciences and Mathematics, University of Maribor, Slovenia.
}
\end{center}

 \begin{abstract}
\noindent 
The M-polynomial provides a unifying framework for a wide class of degree-based topological indices. Despite its structural importance, general methods for computing the M-polynomial under graph constructions remain limited.  In this paper, explicit formulas, and compact ones whenever possible, for the M-polynomial under different graph products whose vertex sets are the Cartesian product of the factors are developed. The products studied are the direct, the Cartesian, the strong, the lexicographic, the symmetric-difference, the disjunction, and the Sierpi\'{n}ski product. The obtained formulas yield a unified structural description of how vertex-degree interactions propagate under graph constructions and extend existing results for degree-based indices at the polynomial level.
\end{abstract}

\textbf{Keywords:} M-polynomial; graph product; degree polynomial; topological index; Sierpi\'{n}ski product.

\textbf{MSC (2020):} 05C31; 05C09; 05C76; 05C92; 05A15.

\section{Introduction}\label{sec:introduction}

Topological indices are numerical graph invariants designed to encode structural
information in a compact algebraic form.
Originally motivated by chemical graph theory, where molecular structures are
modeled by graphs whose vertices represent atoms and edges represent bonds,
these indices have found applications in quantitative structure--property and 
structure--activity relationships (QSPR/QSAR); see 
\cite{gutman2012mathematical,trinajstic2018chemical}.
Among the numerous classes of such invariants, \emph{degree-based} topological indices occupy
a central position due to their simplicity, interpretability, and strong empirical
correlations with a wide range of physicochemical properties \cite{Dearden2017}.

The last decades have witnessed a growing emphasis on unifying frameworks that
encode entire families of topological indices within a single polynomial.
In the distance-based setting, this role is played by the Wiener polynomial
introduced by Sagan, Yeh and Zhang \cite{SaganWienerpolynomial1996},
whose derivative recovers the classical Wiener index.
The closely related Hosoya polynomial, which generalizes the Wiener number,
has also proven to be a powerful tool in studying composite graphs
\cite{STEVANOVIC2001237}.
Similarly, extensions such as the hyper-Wiener index
\cite{KHALIFEH20081402} and the $y$-Wiener index
\cite{HAMZEH20111099} have led to systematic investigations of
graph operations under distance-based invariants.
Parallel developments exist for degree-based indices,
including the Zagreb indices under graph operations
\cite{KHALIFEH2009804}. 

In the degree-based setting, Deutsch and Klavžar introduced in 2015 the
\emph{M-polynomial}~\cite{EmericMploy}. 
This polynomial serves as a unifying framework for expressing many
degree-based topological indices in closed form, much like the Hosoya
polynomial does for distance-based indices~\cite{deutsch2019m}.
Once the bivariate M-polynomial $M(G;x,y)$ is known, a broad spectrum of classical indices---including
the Zagreb, Randi\'{c}, harmonic, symmetric division, forgotten, and inverse sum
indegree indices---can be obtained by applying differential and integral
operators to $M(G;x,y)$ \cite{EmericMploy}. The recent papers~\cite{das2025closed, kumar2025novel} shows how this approach can also be applied to a large variety of Sombor-type indices, while in~\cite{li-2025} the M-polynomial has been used to derive machine learning models for predicting physicochemical properties of antibiotics. Thus, the computation of the M-polynomial for structured graph families becomes a central task.

Despite its importance, only a limited number of general methods for
computing the M-polynomial are currently available.
Early systematic approaches were developed for restricted graph classes
\cite{deutsch2019m,deutsch2023compute},
while numerous recent papers compute explicit expressions for specific
families of graphs, for a selection see~\cite{akhbari2025developing, abubakar2025exploring, 
ishfaq2024topological, CHAMUA2022, kwun2017m, pradeepa-2025}.
However, comparatively less attention has been given to a structural
analysis of how the M-polynomial behaves under general graph
constructions.

In this paper, we consider graph products in which the vertex set is the Cartesian product of the vertex sets of the factors. The products in question are the four standard graph products (the Cartesian, the direct, the strong, and the lexicographic), the symmetric-difference product, the disjunction product, and the Sierpi\'{n}ski product. Classical works on Boolean operations
\cite{HararyBOOLEAN}
and systematic treatments of composite graphs
\cite{YEH1994359,STEVANOVIC2001237}
demonstrate the fundamental role of such constructions in graph theory.
More recently, generalized product-like operations such as the
Sierpi\'{n}ski product \cite{KoviSierpinskiProduct}
have further expanded this structural landscape. For a systematic treatment of graph products, including Boolean product graphs, we refer the reader to the monograph~\cite{hammack2011handbook}, while for the Sierpi\'{n}ski product, in addition to the seminal paper~\cite{KoviSierpinskiProduct}, see also~\cite{henning-2024, tian-2025}.

The motivation of the present work is both structural and computational.
Our goal is to represent the M-polynomial of a product
$G \ast H$ in terms of the M-polynomials of the factors $G$ and $H$
and their degree polynomials.
This objective is natural from a computational standpoint.
If $|V(G)| = n_G$ and $|V(H)| = n_H$, then the product graph
has $n_G n_H$ vertices.
Even moderate factor sizes lead to a quadratic explosion in the number
of vertices, and consequently in the number of edges.
Since the M-polynomial requires enumerating edges according to
the degree types of their endpoints, a direct computation on
$G \ast H$ quickly becomes expensive.
In contrast, computing $M(G;x,y)$ and $M(H;x,y)$ separately is
considerably cheaper.
If one can derive a structural formula expressing
$M(G\ast H;x,y)$ in terms of invariants of the factors,
then the global computation reduces to algebraic manipulations
of smaller polynomials.

Such factorizations parallel classical formulas obtained for distance-based indices under graph operations \cite{behmaram2011some,YEH1994359,STEVANOVIC2001237,KHALIFEH20081402,HAMZEH20111099}. Similar developments have been pursued for Hosoya and Wiener type polynomials under graph operations \cite{mohamadinezhad2010some}, for degree polynomial  and its behavior under products \cite{Jafarpour2022}, as well as the Zagreb indices \cite{KHALIFEH2009804},  the forgotten F-index \cite{AKHTER201770,Nilanjan2016}, and related degree-based descriptors under various constructions \cite{EmericMploy,Ali_2023,Akhter_2016}. Together, these results emphasize the importance of deriving structural formulas at the level of graph operations rather than recomputing invariants directly on large composite graphs.

The main contribution of this paper is the derivation of
explicit and compact formulas for the M-polynomial for the listed seven graph products. Whenever possible, we express the resulting formulas in terms of the M-polynomials and degree polynomials of the factors,
thereby reducing global complexity to local algebra.  

The paper is organized as follows. In Section~\ref{sec:Preliminaries} we first introduce the notation and definitions used throughout the work.  Then the graph products considered are introduced and their degree formulas stated, which serve as the structural foundation for the subsequent derivations. The main results are stated and proved in Section~\ref{sec:formulas}, each product considered being treated in a separate subsection. In Section~\ref{sec:Paths} the obtained formulas are illustrated using the example of products of paths. Section \ref{sec:Conclusion} summarizes the obtained formulas and discusses perspectives for future research.

\section{Preliminaries}
\label{sec:Preliminaries}

Throughout this paper, all graphs are finite, simple, and undirected. 
For a graph $G$, we denote by $V(G)$ and $E(G)$ its vertex set and edge set, respectively. We write $n_G = |V(G)|$ for the order of $G$. The complement of a graph $G$ is denoted by $\overline{G}$. For a vertex $g \in V(G)$, the (open) neighborhood of $g$ is $N_G(g) = \{g' \in V(G):\ gg' \in E(G)\}$. The degree of $g$ is denoted by $\deg_G(g)$, that is, $\deg_G(g) = |N_G(g)|$. The minimum and the maximum degrees of $G$ are denoted by $\delta(G)$ and $\Delta(G)$, respectively. For $i \ge 0$, let 
\[
n_i(G) = |\{g \in V(G):\ \deg_G(g) = i\}|\,,
\]
and for $i \le j$, let 
\[
m_{i,j}(G) = |\{\{g,g'\} \in E(G):\ \{\deg_G(g), \deg_G(g')\} = \{i,j\}\}|\,.
\]
In Section~\ref{sec:Symmetric} we will also use ordered nonadjacent degree-type numbers, which are defined by 
\[
\widehat{m}_{i,j}(G)
=
|\{(g,g') \in V(G)^2:\ gg' \notin E(G),\; (\deg_G(g),\deg_G(g'))=(i,j)\}|\,.
\]
Note that in the definition of $\widehat m_{i,j}(G)$ we allow $g=g'$. 

The {\em degree polynomial} and the {\em M-polynomial} of a graph $G$ are defined respectively  by
\begin{align*}
D_G(t) & = \sum_{i \in \mathcal{{D}}(G)} n_i(G)\, t^i\,, \\
M(G;x,y) & = \sum_{i \le j} m_{i,j}(G) x^i y^j\,.
\end{align*}

\subsection{Graph Products}

We now present the definitions of the graph products considered in this work. The products of our interest are the four standard graph products (the Cartesian, the direct, the strong, and the lexicographic), the disjunction product (OR), the symmetric-difference product (XOR), and the Sierpi\'{n}ski product. If $G$ and $H$ are graphs, and $\ast$ is an arbitrary of the listed products, then we have $V(G\ast H) = V(G)\times V(H)$. The edges in these products are determined in the following way. 

\begin{itemize}
\item In the {\em Cartesian product} $G \cp H$, vertices $(g_1,h_1)$ and $(g_2,h_2)$ are adjacent if $g_1 = g_2$ and $h_1h_2 \in E(H)$, or $h_1 = h_2$ and $g_1g_2 \in E(G)$.
\item In the {\em direct product} $G\times H$, vertices $(g_1,h_1)$ and $(g_2,h_2)$ are adjacent if $g_{1}g_{2}\in E(G)$ and $h_{1}h_{2}\in E(H)$. 
\item In the {\em strong product} $G \boxtimes H$, vertices $(g_1,h_1)$ and $(g_2,h_2)$ are adjacent if $g_1=g_2$ and $h_1h_2\in E(H)$, or $h_1=h_2$ and $g_1g_2\in E(G)$, or $g_1g_2\in E(G)$ and $h_1h_2\in E(H)$.
\item In the {\em lexicographic product} $G[H]$, vertices $(g_1,h_1)$ and $(g_2,h_2)$ are adjacent if $g_1g_2 \in E(G)$, or $g_1=g_2$ and $h_1h_2\in E(H)$.
\item In the {\em symmetric difference product} (XOR) $G \oplus H$, vertices $(g_1,h_1)$ and $(g_2,h_2)$ are adjacent if either $g_1g_2 \in E(G)$ or $h_1h_2 \in E(H)$ (that is, exactly one of these).
\item In the {\em disjunction product} (OR) $G \vee H$, vertices $(g_1,h_1)$ and $(g_2,h_2)$ are adjacent if $g_1g_2 \in E(G)$ or $h_1h_2 \in E(H)$.
\item In the {\em Sierpi\'{n}ski product} $G\otimes_f H$ with respect to $f:V(G)\to V(H)$, the edge set consists of the following two types of edges. 
\begin{itemize}
\item {\em Inner edges}: for every $g\in V(G)$ and every $hh'\in E(H)$ we have $(g,h)(g,h')\in E(G\otimes_f H)$.
\item {\em Connecting edges}: for every $gg'\in E(G)$, we have $(g,f(g'))(g',f(g))\in E(G\otimes_f H)$.
\end{itemize}
\end{itemize}

Figure~\ref{fig:AllGraphs} illustrates the above products for the case $G \cong H \cong P_3$. 
 
\begin{figure}[ht!]
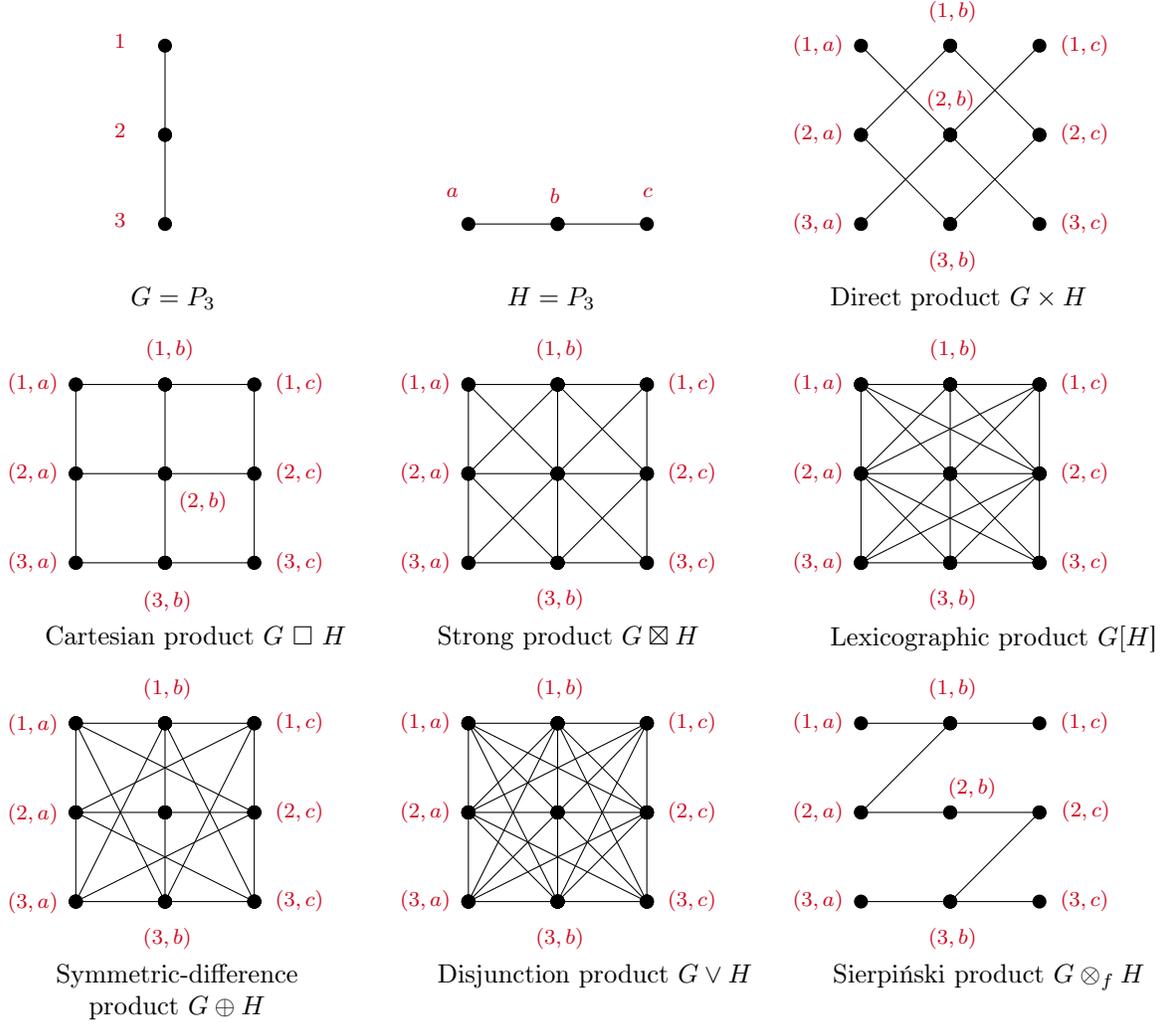

    \centering


\caption{The graph products considered for the case $G\cong H \cong P_3$. In the Sierpi\'{n}ski product $G\otimes_f H$, the function $f$ is defined by $f(1)=a$, $f(2)=b$, and $f(3)=c$.}
\label{fig:AllGraphs}
\end{figure}

Let $G$ and $H$ be graphs, and let $\ast$ be an arbitrary graph product from the seven graph products just introduced. For a given $h\in V(H)$, the set of vertices $\{(g,h):\ g\in V(G)\}$ is called a {\em $G$-layer}. Analogously, for a given vertex $g\in V(G)$, the set of vertices $\{(g,h):\ h\in V(H)\}$ is called an {\em $H$-layer}. When speaking of a $G$-layer (resp.\ $H$-layer) we may also interpret it as the subgraph of $G\ast H$ induced by the vertices from the layer. With this interpretation we may infer that in most of the cases, layers are isomorphic to the corresponding factors. 

In the derivation of the M-polynomial for graph products, the degree of vertices in the resulting graphs play a central role. For convenience and later reference we list degree formulas for the products considered. These formulas are well-known for most of the products considered here, they can be deduced directly from definitions. So let $(g,h)$ be an arbitrary vertex in any of these products. Then we have: 
\begin{align}
\deg_{G\square H}(g,h) & =\deg_G(g) + \deg_H(h)\,, \label{lem:DegreeCartesian} \\
\deg_{G\times H}(g,h) & = \deg_G(g)\deg_H(h)\,, \label{lem:DegreeTensor} \\
\deg_{G\boxtimes H}(g,h) & = \deg_G(g) + \deg_H(h) + \deg_G(g)\deg_H(h)\,, \label{lem:DegreeStrong} \\
\deg_{G[H]}(g,h) & = n_H\deg_G(g)+\deg_H(h)\,, \label{lem:DegreeLexicographic} \\
\deg_{G\oplus H}(g,h) & = n_H \deg_G(g) + n_G\deg_H(h) - 2\deg_G(g)\deg_H(h)\,, \label{lem:DegreeSymmetric} \\
\deg_{G\vee H}(g,h) & = n_H \deg_G(g) + n_G\deg_H(h) - \deg_G(g)\deg_H(h)\,.\label{lem:DegreeDisjunction}
\end{align}
Consider finally the Sierpi\'{n}ski product $G\otimes_f H$ with respect to $f:V(G)\to V(H)$. Then setting 
$$c(g,h) = \bigl|\{g'\in N_G(g):\ f(g')=h\}\bigr|\,,$$
we infer that 
\begin{align}
\deg_{G\otimes_f H}(g,h) & = \deg_H(h) + c(g,h)\,. \label{lem:DegreeSierpinski}
\end{align}

\section{Formulas}
 \label{sec:formulas}

In this section we present formulas for the M-polynomial of all the graph products considered in this paper. Each of the products is discussed in a separate subsection.

\subsection{Cartesian Product}
 \label{sec:Cartesian}

\begin{theorem}\label{thm:cartesian}
If $G$ and $H$ are graphs, then 
\begin{equation}\label{eq:CartesianCompact}
    M(G\cp H;x,y)=D_G(xy)\,M(H;x,y)+D_H(xy)\,M(G;x,y).
\end{equation}

\end{theorem}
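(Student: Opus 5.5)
We plan to prove \eqref{eq:CartesianCompact} by partitioning the edge set of $G\cp H$ according to the two kinds of edges in the definition, and then computing the contribution of each kind to $M(G\cp H;x,y)$ separately. Write
\[
M(G\cp H;x,y)=\sum_{e\in E(G\cp H)} x^{\min(d_1(e),d_2(e))}\,y^{\max(d_1(e),d_2(e))},
\]
where $d_1(e),d_2(e)$ are the degrees of the endpoints of $e$. This is just the definition of the M-polynomial, regrouped edge by edge. By the definition of the Cartesian product, each edge $(g_1,h_1)(g_2,h_2)$ satisfies exactly one of the following: either $g_1=g_2$ and $h_1h_2\in E(H)$, so that it lies in an $H$-layer, or $h_1=h_2$ and $g_1g_2\in E(G)$, so that it lies in a $G$-layer. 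So $E(G\cp H)$ is the disjoint union of the edge sets of the $n_G$ $H$-layers and the $n_H$ $G$-layers, and the sum splits accordingly.

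Next we treat the $H$-layer edges. Fix $g\in V(G)$ with $\deg_G(g)=d$, and an edge $hh'\in E(H)$ with $\deg_H(h)=a\le b=\deg_H(h')$. By \eqref{lem:DegreeCartesian}, the endpoints $(g,h)$ and $(g,h')$ have degrees $d+a$ and $d+b$. Adding the same constant $d$ preserves the order, so $d+a\le d+b$ and the edge contributes
\[
x^{d+a}y^{d+b}=(xy)^d\,x^ay^b.
\]
This order-preservation is the only point needing care, because the M-polynomial uses the ordered pair $i\le j$; it is what makes the factorization work. Summing over all edges of $H$ gives $(xy)^{\deg_G(g)}M(H;x,y)$ for the layer at $g$. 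Summing over all $g\in V(G)$ and grouping the vertices by degree gives
\[
\sum_i n_i(G)(xy)^i\,M(H;x,y)=D_G(xy)\,M(H;x,y).
\]

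The $G$-layer edges are handled symmetrically. Fix $h$ with $\deg_H(h)=d'$; an edge $gg'\in E(G)$ with endpoint degrees $a'\le b'$ becomes an edge with endpoint degrees $d'+a'\le d'+b'$. Summing over the layers therefore yields $D_H(xy)\,M(G;x,y)$. Adding the two contributions gives \eqref{eq:CartesianCompact}. We do not expect a genuine obstacle: the only subtle step is the monotonicity remark, which ensures that no reordering of exponents, and hence no symmetrization of the M-polynomial, is needed.
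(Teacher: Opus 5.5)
Your proof is correct and follows essentially the same route as the paper: split $E(G\cp H)$ into $H$-layer and $G$-layer edges, use $\deg_{G\cp H}(g,h)=\deg_G(g)+\deg_H(h)$, and factor $x^{d+a}y^{d+b}=(xy)^d x^a y^b$ to obtain $D_G(xy)\,M(H;x,y)+D_H(xy)\,M(G;x,y)$. Your explicit remark that adding the same $d$ preserves $a\le b$, so no reordering of exponents is needed, states a step the paper uses only implicitly.
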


\begin{proof}
Let $g\in V(G)$ and let $e = (g,h_1)(g,h_2)$ be an edge from an $H$-layer. Then $h_1h_2\in E(H)$ and by~\eqref{lem:DegreeCartesian}, the endpoints of $e$ have respective degrees $\deg_G(g)+\deg_H(h_1)$ and $\deg_G(g)+\deg_H(h_2)$.
Fix $i,j_1,j_2$ and consider edges $h_1h_2\in E(H)$ with 
    $(\deg_H(h_1),\deg_H(h_2))=(j_1,j_2)$.  
    For each such edge, every choice of $g\in V(G)$ with $\deg_G(g)=i$
    yields an edge in $G\cp H$ whose endpoint degrees are
    $(i+j_1,i+j_2)$.  
    Hence, for each triple $(i,j_1,j_2)$ we obtain
    $n_i(G)\, m_{j_1,j_2}(H)$ edges contributing the monomial 
    $x^{i+j_1}y^{i+j_2}$.  
    Summing over all $i,j_1,j_2$ gives $\sum_{i}\sum_{j_1\le j_2}
n_i(G)\, m_{j_1,j_2}(H)\,
x^{i+j_1}\, y^{i+j_2}$.

By the commutativity of the Cartesian product, we get that the contribution of the edges from $H$-layers is $\sum_{j}\sum_{i_1\le i_2} n_j(H)\, m_{i_1,i_2}(G)\, x^{i_1+j}\, y^{i_2+j}$, hence $ M(G\cp H;x,y)$ is equal to
\begin{equation*}
\sum_{i}\sum_{j_1\le j_2}
n_i(G)\, m_{j_1,j_2}(H)\,
x^{i+j_1}\, y^{i+j_2}
+
\sum_{j}\sum_{i_1\le i_2}
n_j(H)\, m_{i_1,i_2}(G)\,
x^{i_1+j}\, y^{i_2+j}.
\end{equation*}
Using $x^{i+j_1}y^{i+j_2}=(xy)^i x^{j_1}y^{j_2}$, we can factor the first sum as
\[
\sum_i n_i(G)(xy)^i \sum_{j_1\le j_2} m_{j_1,j_2}(H)x^{j_1}y^{j_2}
=
D_G(xy)\,M(H;x,y)\,.
\]
The second sum factors analogously as $D_H(xy)\,M(G;x,y)$.
\end{proof}

\subsection{Direct Product} 
\label{sec:Direct}

 \begin{theorem}
\label{thm:tensor-minmax}
If $G$ and $H$ are graphs, then 
\begin{equation}
\begin{aligned}
M(G\times H;x,y)
=\sum_{i_1\le i_2} \sum_{j_1\le j_2}
m_{i_1,i_2}(G) m_{j_1,j_2}(H) 
&\Bigl( x^{\min\{i_1j_1,  i_2j_2\}}y^{\max\{i_1j_1,  i_2j_2\}}
\\&+
x^{\min\{i_1j_2,  i_2j_1\}}y^{\max\{i_1j_2,  i_2j_1\}}
\Bigr)\,.
\end{aligned}
\end{equation}
\end{theorem}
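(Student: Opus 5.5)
The plan is to count edges of $G\times H$ by pairing edges of the factors, and to read off the degree type of each resulting edge from \eqref{lem:DegreeTensor}. The key structural fact is the following. Every edge of $G\times H$ has the form $(g_1,h_1)(g_2,h_2)$ with $g_1g_2\in E(G)$ and $h_1h_2\in E(H)$. Conversely, each unordered pair of edges $e=g_1g_2\in E(G)$ and $f=h_1h_2\in E(H)$ produces exactly two distinct edges of $G\times H$:
\[
(g_1,h_1)(g_2,h_2) \quad\text{and}\quad (g_1,h_2)(g_2,h_1).
\]
These two are distinct because $g_1\ne g_2$ and $h_1\ne h_2$. Moreover, the map (edge of $G\times H$) $\mapsto$ (its pair of projected edges) is exactly two-to-one. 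I would state this as a short bijection argument: the edges of $G\times H$ correspond bijectively to pairs $(e,f)$ together with a choice of one of the two ``matchings'' of endpoints.

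Next I fix the degree types. Suppose $e$ has endpoint degrees $\{i_1,i_2\}$ with $i_1\le i_2$, and orient $e$ so that $\deg_G(g_1)=i_1$ and $\deg_G(g_2)=i_2$. Similarly orient $f$ so that $\deg_H(h_1)=j_1\le j_2=\deg_H(h_2)$. By \eqref{lem:DegreeTensor}:
\begin{itemize}
\item the first edge has endpoint degrees $i_1j_1$ and $i_2j_2$;
\item the second edge has endpoint degrees $i_1j_2$ and $i_2j_1$.
\end{itemize}
Since the M-polynomial records the unordered degree pair in the form $x^{\text{smaller}}y^{\text{larger}}$, these two edges contribute
\[
x^{\min\{i_1j_1,i_2j_2\}}y^{\max\{i_1j_1,i_2j_2\}}
\quad\text{and}\quad
x^{\min\{i_1j_2,i_2j_1\}}y^{\max\{i_1j_2,i_2j_1\}},
\]
respectively. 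The number of pairs $(e,f)$ of type $((i_1,i_2),(j_1,j_2))$ is $m_{i_1,i_2}(G)\,m_{j_1,j_2}(H)$. Summing over all types then gives the stated formula directly.

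The only delicate point, and the one I expect to need care, is well-definedness when $i_1=i_2$ or $j_1=j_2$, since then the orientation of $e$ or $f$ is not determined by degrees. I would check that the pair of monomials is independent of the chosen orientation. Reversing $e$ (or $f$) simply swaps the two produced edges, so the multiset $\{(i_1j_1,i_2j_2),(i_1j_2,i_2j_1)\}$ is unchanged. When $i_1=i_2$, for example, both expressions coincide in multiset form anyway. Thus each edge of $G\times H$ is counted exactly once, with the correct monomial, and the theorem follows.
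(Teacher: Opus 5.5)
Your proposal is correct and follows essentially the same argument as the paper. Both pair each edge $g_1g_2\in E(G)$ with each edge $h_1h_2\in E(H)$, take the two resulting edges $(g_1,h_1)(g_2,h_2)$ and $(g_1,h_2)(g_2,h_1)$ with endpoint degrees read off from \eqref{lem:DegreeTensor}, and multiply by $m_{i_1,i_2}(G)\,m_{j_1,j_2}(H)$. Your two-to-one projection argument and the orientation check for $i_1=i_2$ or $j_1=j_2$ just spell out what the paper leaves implicit.
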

 
\begin{proof}
Let $g_1g_2\in E(G)$, where $\deg_{G}(g_1)=i_{1} \le i_{2} = \deg_{G}(g_2)$, and let $h_1h_2\in E(H)$, where $\deg_{H}(h_1)=j_{1} \le j_2 = \deg_{H}(h_2)$. By the definition of the direct product, the edges $g_1g_2$ and $h_1h_2$ produce edges $(g_1,h_1)(g_2,h_2)$ and $(g_1,h_2)(g_2,h_1)$
in $G\times H$. By~\eqref{lem:DegreeTensor}, the endpoint degrees of the first one have degrees $(i_{1}j_{1},i_{2}j_{2})$. This edge thus contributes to the degree-type given by the unordered pair
$\{i_{1}j_{1},i_{2}j_{2}\}$, which we encode as $(a_{1},b_{1})$ with $a_{1}\le b_{1}$. Analogously,  the endpoint degrees of the second edge have degrees $(i_{1}j_{2},i_{2}j_{1})$, so it contributes to the degree-type encoded by the unordered pair $\{i_{1}j_{2},i_{2}j_{1}\}$, i.e., $(a_{2},b_{2})$ with $a_{2}\le b_{2}$.

Now fix $(i_{1},i_{2})$ and $(j_{1},j_{2})$.  
There are $m_{i_{1},i_{2}}(G)$ edges in $G$ with endpoint degrees $\{i_{1},i_{2}\}$, and
$m_{j_{1},j_{2}}(H)$ edges in $H$ with endpoint degrees $\{j_{1},j_{2}\}$.  Hence, for fixed $(i_{1},i_{2})$ and $(j_{1},j_{2})$ the total contribution to the
M-polynomial is
\[
m_{i_{1},i_{2}}(G) m_{j_{1},j_{2}}(H)
\bigl( x^{a_{1}}y^{b_{1}} + x^{a_{2}}y^{b_{2}} \bigr)\,,
\]
where $a_{1}=\min\{i_1j_1,  i_2j_2\}$, $b_{1}=\max\{i_1j_1,  i_2j_2\}$, $a_2=\min\{i_1j_2,  i_2j_1\}$, and $b_2=\max\{i_1j_2,  i_2j_1\}$. Summing over all possible degree-types $(i_{1},i_{2})$ of edges in $G$ and
$(j_{1},j_{2})$ of edges in $H$ yields the stated formula.
\end{proof}

Since, $i_1\le i_2$ and $j_1\le j_2$, we can rewrite $M = M(G\times H;x,y)$ as follows
\begin{equation*}
\begin{aligned}
M &=\sum_{i_1\le i_2} \sum_{j_1\le j_2}
m_{i_1,i_2}(G) m_{j_1,j_2}(H) 
\Bigl( x^{i_1j_1}y^{i_2j_2}+
x^{\min\{i_1j_2,  i_2j_1\}}y^{\max\{i_1j_2,  i_2j_1\}}
\Bigr)\\
&=\sum_{i_1\le i_2}m_{i_1,i_2}(G) 
\Bigl( \sum_{j_1\le j_2}
 m_{j_1,j_2}(H) x^{i_1j_1}y^{i_2j_2}+\sum_{j_1\le j_2}
 m_{j_1,j_2}(H) 
x^{\min\{i_1j_2,  i_2j_1\}}y^{\max\{i_1j_2,  i_2j_1\}}
\Bigr)\\
&=\sum_{i_1\le i_2}m_{i_1,i_2}(G) 
\Bigl( M(H;x^{i_1},y^{i_2})+\sum_{j_1\le j_2}
 m_{j_1,j_2}(H) 
x^{\min\{i_1j_2,  i_2j_1\}}y^{\max\{i_1j_2,  i_2j_1\}}
\Bigr)\,.
\end{aligned}
\end{equation*}

\subsection{Strong Product}
 \label{sec:Strong}

To formulate a formula for the M-polynomial of strong products, some preparation is needed. Let $G$ and $H$ be graphs. Then we can partition the set of edges of $G\boxtimes H$ into those that lie in $G$-layers, those that lie in $H$-layers, and the other edges (direct-type ones). We respectively treat them as follows. 

\begin{enumerate}
\item[(1)] For edges $g_1g_2\in E(G)$ and vertices $h\in V(H)$, where $\deg_G(g_1)=i_1$, $\deg_G(g_2)=i_2$, and $\deg_H(h)=j$, set $d_1 = i_1 + j + i_1j$, $d_2 = i_2 + j + i_2j$, and 
\begin{equation}\label{eq:MPolStrongG}
    M^{(G)}(x,y)
=\sum_{j}
\sum_{i_1\le i_2}
n_j(H)  m_{i_1,i_2}(G) 
x^{\min(d_1,d_2)}  
y^{\max(d_1,d_2)}\,.
\end{equation}
    \item[(2)] For edges $h_1h_2\in E(H)$ and vertices $g\in V(G)$, where $\deg_H(h_1)=j_1$, $\deg_H(h_2)=j_2$, and $\deg_G(g)=i$, set $e_1 = i + j_1 + ij_1$, $e_2 = i + j_2 + ij_2$, and 
\begin{equation}\label{eq:MPolStrongH}
    M^{(H)}(x,y)
=\sum_{i}
\sum_{j_1\le j_2}
n_i(G)  m_{j_1,j_2}(H) 
x^{\min(e_1,e_2)}  
y^{\max(e_1,e_2)}\,.
\end{equation}
\item[(3)] For edges $g_1g_2\in E(G)$ and $h_1h_2\in E(H)$, where $\deg_G(g_1)=i_1$, $\deg_G(g_2)=i_2$, $\deg_H(h_1)=j_1$, and $\deg_H(h_2)=j_2$,  with $i_1\le i_2$ and $j_1\le j_2$, set $f_{11} = i_1 + j_1 + i_1j_1$, $f_{12}  = i_1 + j_2 + i_1j_2$, $f_{21}  = i_2 + j_1 + i_2j_1$, $f_{22}  = i_2 + j_2 + i_2j_2$, and 
\begin{equation}\label{eq:MPolStrongTimes}
M^{(\times)}(x,y)
=
\sum_{\substack{i_1\le i_2 \\ j_1\le j_2}}
m_{i_1,i_2}(G)  m_{j_1,j_2}(H) \Big(
x^{\min(f_{11},f_{22})} y^{\max(f_{11},f_{22})}
+
x^{\min(f_{12},f_{21})} y^{\max(f_{12},f_{21})}
\Big)\,.
\end{equation}
\end{enumerate}
 
The above sums are taken over unordered degree pairs, so that $i_1\le i_2$ and $j_1\le j_2$, and the powers of $x$ and $y$ are always ordered using $\min$ and $\max$, in accordance with the M-polynomial convention $i\le j$. With these preparations the main result for the strong product reads as follows. 

\begin{theorem}
\label{thm:strong}
If $G$ and $H$ are graphs, then 
\begin{equation}\label{eq:MPolStrongAll}
    M(G\boxtimes H;x,y)
=
M^{(G)}(x,y) + M^{(H)}(x,y) + M^{(\times)}(x,y)\,.
\end{equation}
\end{theorem}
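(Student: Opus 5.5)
The proof will partition $E(G\boxtimes H)$ into three disjoint classes and show that each class contributes exactly one of $M^{(G)}$, $M^{(H)}$, $M^{(\times)}$ to $M(G\boxtimes H;x,y)$. Since the M-polynomial is additive over any partition of the edge set, summing the three contributions then gives~\eqref{eq:MPolStrongAll}.

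By the definition of $G\boxtimes H$, an edge $(g_1,h_1)(g_2,h_2)$ satisfies exactly one of three conditions:
\begin{itemize}
\item $h_1=h_2$ and $g_1g_2\in E(G)$, so the edge lies in a $G$-layer;
\item $g_1=g_2$ and $h_1h_2\in E(H)$, so the edge lies in an $H$-layer;
\item $g_1g_2\in E(G)$ and $h_1h_2\in E(H)$, so the edge is of direct type.
\end{itemize}
These conditions are mutually exclusive. For example, $g_1=g_2$ is incompatible with $g_1g_2\in E(G)$ because $G$ is simple.

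For the $G$-layer edges, fix an edge $g_1g_2\in E(G)$ with degrees $i_1\le i_2$ and a vertex $h$ with $\deg_H(h)=j$. By~\eqref{lem:DegreeStrong}, the edge $(g_1,h)(g_2,h)$ has endpoint degrees $d_1=i_1+j+i_1j$ and $d_2=i_2+j+i_2j$. Hence it contributes the monomial $x^{\min(d_1,d_2)}y^{\max(d_1,d_2)}$. The pair (edge of $G$, vertex of $H$) determines the product edge uniquely and conversely. So grouping by $(i_1,i_2,j)$ gives $n_j(H)\,m_{i_1,i_2}(G)$ such edges, and the class contributes exactly $M^{(G)}$. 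The $H$-layer edges give $M^{(H)}$ by the same argument with the roles of $G$ and $H$ exchanged.

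The direct-type class needs a little care, and it parallels the proof of Theorem~\ref{thm:tensor-minmax}. Each unordered pair consisting of an edge $g_1g_2\in E(G)$ and an edge $h_1h_2\in E(H)$ yields exactly two edges of $G\boxtimes H$, namely $(g_1,h_1)(g_2,h_2)$ and $(g_1,h_2)(g_2,h_1)$. These are distinct, and each direct-type edge arises from exactly one such pair. Orient the pair so that $\deg_G(g_1)=i_1\le i_2$ and $\deg_H(h_1)=j_1\le j_2$. By~\eqref{lem:DegreeStrong}, the two edges then have endpoint degrees $\{f_{11},f_{22}\}$ and $\{f_{12},f_{21}\}$.

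The one subtle point is the case of equal degrees, $i_1=i_2$ or $j_1=j_2$. There the labelling of the endpoints is not unique. Switching the labels only swaps which of the two product edges is called "first", so the multiset of the two monomials is unchanged. Summing $m_{i_1,i_2}(G)\,m_{j_1,j_2}(H)$ times the two monomials over all degree types therefore gives exactly $M^{(\times)}$.

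Adding the three contributions completes the proof.
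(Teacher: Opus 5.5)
Your proposal is correct and follows essentially the same route as the paper: you partition $E(G\boxtimes H)$ into $G$-layer, $H$-layer and direct-type edges, count each class by degree type using the strong-product degree formula, and add the three contributions. Your extra remarks on why the three classes are mutually exclusive, and on relabelling when $i_1=i_2$ or $j_1=j_2$, make explicit points that the paper leaves implicit.
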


\begin{proof}
If $(g,h)\in V(G\boxtimes H)$, then by~\eqref{lem:DegreeStrong}, $\deg_{G\boxtimes H}(g,h) = \deg_G(g) + \deg_H(h) + \deg_G(g)\deg_H(h)$. As also already mentioned, we have a partition of $E(G\boxtimes H)$ into three pairwise disjoint sets
\begin{equation}\label{eq:StrongStructure}
    E(G\boxtimes H)
=
E_G \, \dot\cup \, E_H  \, \dot\cup \, E_\times\,,
\end{equation}
where $E_G$ is the set of edges lying in $G$-layers, $E_H$ is the set of edges from $H$-layers, and $E_\times$ is the set of the other edges (the direct-type ones). We examine now the contribution of each of the three parts to the M-polynomial. 

Consider an edge $g_1g_2\in E(G)$ with $\deg_G(g_1)=i_1$ and  $ \deg_G(g_2)=i_2$ where $i_1\le i_2$,   and a vertex $h\in V(H)$ with $\deg_H(h)=j$. Then, using~\eqref{lem:DegreeStrong}, the edge $(g_1,h)(g_2,h)$ contributes the unordered degree-pair $\{\min(d_1,d_2),\max(d_1,d_2)\}$, where $d_1 = i_1 + j + i_1j$ and $d_2 = i_2 + j + i_2j$. There are $m_{i_1,i_2}(G)$ edges $g_1g_2$ in $G$ whose endpoint degrees are $\{i_1,i_2\}$, and there are $n_j(H)$ vertices $h$ in $H$ with degree $j$. Therefore, the number of edges in $E_G$ corresponding to the triple $(i_1,i_2,j)$ is $m_{i_1,i_2}(G) n_j(H)$, and each of them contributes the same monomial $x^{\min(d_1,d_2)}y^{\max(d_1,d_2)}$ in the M-polynomial. Summing over all $i_1\le i_2$ and all degrees $j$ of $H$, we obtain the contribution~\eqref{eq:MPolStrongG} to $M^{(G)}(x,y)$.

By the commutativity of the strong product, we analogously infer that the edges from $H$-layers contribute~\eqref{eq:MPolStrongH} to $M^{(H)}(x,y)$.
 
Consider next the edges from $E_\times$. Let $g_1g_2\in E(G)$ and $h_1h_2\in E(H)$, where $\deg_G(g_1)=i_1$ and $\deg_G(g_2)=i_2$ with $i_1\le i_2$, and $\deg_H(h_1)=j_1$ and $\deg_H(h_2)=j_2$ with $j_1\le j_2$. By~\eqref{lem:DegreeStrong}, 
\[
\begin{aligned}
\deg(g_1,h_1) &= i_1 + j_1 + i_1j_1 = f_{11},\quad \deg(g_1,h_2) = i_1 + j_2 + i_1j_2 = f_{12},\\
\deg(g_2,h_1) &= i_2 + j_1 + i_2j_1 = f_{21}, \quad \deg(g_2,h_2) = i_2 + j_2 + i_2j_2 = f_{22}.
\end{aligned}
\]
Thus, the edge $(g_1,h_1)(g_2,h_2)$ has endpoint degrees          $(f_{11},f_{22})$ and contributes the unordered pair $\{\min(f_{11},f_{22}),\max(f_{11},f_{22})\}$, while the edge $(g_1,h_2)(g_2,h_1)$ has endpoint degrees $(f_{12},f_{21})$ and contributes the unordered pair $\{\min(f_{12},f_{21}),\max(f_{12},f_{21})\}$.

For fixed $i_1\le i_2$ and $j_1\le j_2$, there are $m_{i_1,i_2}(G)$ choices of $g_1g_2\in E(G)$ with endpoint degrees $\{i_1,i_2\}$, and there are $m_{j_1,j_2}(H)$ choices of $h_1h_2\in E(H)$ with endpoint degrees $\{j_1,j_2\}$. Hence the total contribution of the edges from $E_\times$ is
\[
\begin{aligned}
M^{(\times)}(x,y)
&=\sum_{i_1\le i_2}\sum_{j_1\le j_2}
\Big[
m_{i_1,i_2}(G)  m_{j_1,j_2}(H) 
x^{\min(f_{11},f_{22})} y^{\max(f_{11},f_{22})}
\\
&\quad\quad\quad
+
m_{i_1,i_2}(G)  m_{j_1,j_2}(H) 
x^{\min(f_{12},f_{21})} y^{\max(f_{12},f_{21})}
\Big]
\\
&=
\sum_{i_1\le i_2}\sum_{j_1\le j_2}
m_{i_1,i_2}(G)  m_{j_1,j_2}(H) \Big(
x^{\min(f_{11},f_{22})} y^{\max(f_{11},f_{22})}
+
x^{\min(f_{12},f_{21})} y^{\max(f_{12},f_{21})}
\Big)\,.
\end{aligned}
\]
Combining the contributions from $E_G$, $E_H$, and $E_\times$, the result follows. 
\end{proof}

The terms $M^{(G)}(x,y)$, $ M^{(H)}(x,y)$, and $ M^{(\times)}(x,y)$ can be formulated also in a more compact form as we demonstrate next. Since $i_1\leq i_2$ and $j_1\leq j_2$, we have $i_1 + j + i_1j = d_1\le d_2 = i_2 + j + i_2j$, $i + j_1 + ij_1 = e_1\le e_2 = i + j_2 + ij_2$, and $i_1 + j_1 + i_1j_1 = f_{11}\le f_{22} = i_2 + j_2 + i_2j_2$. Therefore,  we can write

\begin{align*}
        M^{(G)}(x,y)
&=\sum_{j}
\sum_{i_1\le i_2}
n_j(H)  m_{i_1,i_2}(G) 
x^{\min(d_1,d_2)}  
y^{\max(d_1,d_2)}\\
&=\sum_{j}n_j(H)
\sum_{i_1\le i_2}
  m_{i_1,i_2}(G) 
x^{d_1}  
y^{d_2}\\
&=\sum_{j}n_j(H)
\sum_{i_1\le i_2}
  m_{i_1,i_2}(G) 
x^{i_1 + j + i_1j}  
y^{i_2 + j + i_2j}\\
&=\sum_{j}n_j(H)x^{j}y^{j}
\sum_{i_1\le i_2}
  m_{i_1,i_2}(G) 
x^{i_1(1 +j)}  
y^{i_2(1 +  j)}\\
&=\sum_{j}n_j(H)x^{j}y^{j}
M(G;x^{1+j},y^{1+j}).
\end{align*}
Similarly, 
\begin{align*} 
    M^{(H)}(x,y)
&=\sum_{i}n_i(G) x^{i}y^{i}M(H;x^{1+i},y^{1+i}).
\end{align*}
Moreover, setting $Z = M^{(\times)}(x,y)$, we can compute as follows: 
\begin{align*}
    Z&=\sum_{i_1\le i_2}\sum_{j_1\le j_2}
m_{i_1,i_2}(G)  m_{j_1,j_2}(H) \Big(
x^{\min(f_{11},f_{22})} y^{\max(f_{11},f_{22})}
+
x^{\min(f_{12},f_{21})} y^{\max(f_{12},f_{21})}
\Big)\\
&=\sum_{i_1\le i_2}m_{i_1,i_2}(G)\sum_{j_1\le j_2}
  m_{j_1,j_2}(H) \Big(
x^{f_{11}} y^{f_{22}}
+
x^{\min(f_{12},f_{21})} y^{\max(f_{12},f_{21})}
\Big)\\
&=\sum_{i_1\le i_2} m_{i_1,i_2}(G) \Big(\sum_{j_1\le j_2}
  m_{j_1,j_2}(H)
x^{ i_1 + j_1 + i_1j_1} y^{i_2 + j_2 + i_2j_2}
+
\sum_{j_1\le j_2}
  m_{j_1,j_2}(H) x^{\min(f_{12},f_{21})} y^{\max(f_{12},f_{21})}
\Big)\\
&=\sum_{i_1\le i_2} m_{i_1,i_2}(G) \Big(x^{i_1}y^{i_2}\sum_{j_1\le j_2}
  m_{j_1,j_2}(H)
x^{  j_1(1 + i_1)} y^{ j_2(1 + i_2)}
+
\sum_{j_1\le j_2}
  m_{j_1,j_2}(H) x^{\min(f_{12},f_{21})} y^{\max(f_{12},f_{21})}
\Big)\\
&=\sum_{i_1\le i_2} m_{i_1,i_2}(G) \Big(x^{i_1}y^{i_2}M(H;x^{1+i_1},y^{1+i_2})
+
\sum_{j_1\le j_2}
  m_{j_1,j_2}(H) x^{\min(f_{12},f_{21})} y^{\max(f_{12},f_{21})}
\Big).
\end{align*}

The Cartesian degree identity~\eqref{lem:DegreeCartesian} can be obtained from the strong-product degree identity~\eqref{lem:DegreeStrong} by dropping the multiplicative term, hence $M(G\cp H;x,y)$ can be recovered from $M(G\boxtimes H;x,y)$ as follows. 

\begin{corollary}
\label{cor:cartesian}
If $G$ and $H$ are graphs, then 
\begin{equation*}
    M(G\cp H;x,y)
=
M^{(G)}_{\square}(x,y)
+
M^{(H)}_{\square}(x,y),
\end{equation*}
where
\begin{equation*}
    M^{(G)}_{\square}(x,y)
=\sum_{j}
\sum_{i_1\le i_2}
n_j(H) m_{i_1,i_2}(G) 
x^{\min(i_1+j,i_2+j)} y^{\max(i_1+j,i_2+j)}\,,
\end{equation*}
\begin{equation*}
    M^{(H)}_{\square}(x,y)
=\sum_{i}
\sum_{j_1\le j_2}
n_i(G) m_{j_1,j_2}(H) 
x^{\min(j_1+i,j_2+i)} y^{\max(j_1+i,j_2+i)}\,.
\end{equation*}
\end{corollary}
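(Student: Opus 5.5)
The plan is to repeat the proof of Theorem~\ref{thm:strong} with the strong-product degree formula~\eqref{lem:DegreeStrong} replaced by the Cartesian one~\eqref{lem:DegreeCartesian}, keeping only the layer edges. By definition, the edge set of $G\cp H$ is exactly $E_G\,\dot\cup\,E_H$, the edges lying in $G$-layers and in $H$-layers. Equivalently, it is the partition~\eqref{eq:StrongStructure} with the direct-type part $E_\times$ removed. So the M-polynomial splits as the contribution of $E_G$ plus the contribution of $E_H$, and no analogue of $M^{(\times)}$ appears.

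For $E_G$, fix an edge $g_1g_2\in E(G)$ with endpoint degrees $i_1\le i_2$ and a vertex $h\in V(H)$ with $\deg_H(h)=j$. By~\eqref{lem:DegreeCartesian}, the edge $(g_1,h)(g_2,h)$ has endpoint degrees $i_1+j$ and $i_2+j$. Hence it contributes the monomial $x^{\min(i_1+j,i_2+j)}y^{\max(i_1+j,i_2+j)}$. For fixed $(i_1,i_2,j)$ there are exactly $m_{i_1,i_2}(G)\,n_j(H)$ such edges, because each edge of $E_G$ arises from a unique pair $(g_1g_2,h)$. Summing over all $j$ and all $i_1\le i_2$ gives $M^{(G)}_{\square}(x,y)$. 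The $E_H$ part follows symmetrically, or via commutativity of $\cp$, and gives $M^{(H)}_{\square}(x,y)$. Adding the two contributions yields the corollary.

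I do not expect any step to be a real obstacle. The only point needing care is the bijection between edges of $E_G$ and pairs (edge of $G$, vertex of $H$), which ensures there is no double counting. As a consistency check, since $i_1\le i_2$ the min and max are simply $i_1+j$ and $i_2+j$. The sum then factors as $D_H(xy)M(G;x,y)$, in agreement with Theorem~\ref{thm:cartesian}.
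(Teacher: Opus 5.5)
Your proposal is correct and follows the paper's own argument: the paper likewise starts from the proof of Theorem~\ref{thm:strong}, drops the direct-type edge class $E_\times$, and substitutes the Cartesian degree formula~\eqref{lem:DegreeCartesian} into the layer-edge counts. Your bijection between $E_G$ and pairs (edge of $G$, vertex of $H$) and your consistency check against Theorem~\ref{thm:cartesian} add details that the paper leaves implicit.
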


\begin{proof}
Start with the proof of Theorem~\ref{thm:strong} and remove the direct adjacency condition. 
Vertex degrees now satisfy \eqref{lem:DegreeCartesian} with no multiplicative term.  
Substituting this degree into the M-polynomial summation formulas yields exactly the two
expressions displayed above. No direct edges appear, so there is no $M^{(\times)}$ term.  
\end{proof}

In contrast, the direct product $G\times H$ cannot be obtained by
a direct specialization of Theorem~\ref{thm:strong}.  Although $G\times H$
uses only the direct adjacency mechanism (i.e., only the edge class
$E_\times$ survives), the vertex degrees in the direct product satisfy the
different identity \eqref{lem:DegreeTensor} rather than  \eqref{lem:DegreeStrong}.
Since the M-polynomial depends on the degrees of the endpoints in the
final graph, one must recompute the endpoint degrees under the
direct-product degree rule; therefore, a direct-product corollary does not
follow from Theorem~\ref{thm:strong} by merely discarding terms.

\subsection{Lexicographic Product}
 \label{sec:Lexicographic}

\begin{theorem}
\label{thm:lexico}
If $G$ and $H$ are graphs, then 
\begin{equation}
    \begin{aligned}
M(G[H];x,y)
&=
\sum_{i}\sum_{j_1\le j_2}
n_i(G)\, m_{j_1,j_2}(H)\,
x^{\,j_1 + i \cdot n_H}\,
y^{\,j_2 + i \cdot n_H}  \\
&\quad+
\sum_{i_1\le i_2}\sum_{j_1 \leq j_2}
m_{i_1,i_2}(G)\, n_{j_1}(H)\, n_{j_2}(H)\,
x^{\,j_1 + i_1 \cdot n_H}\,
y^{\,j_2 + i_2 \cdot n_H}\,.
\end{aligned}
\end{equation}
\end{theorem}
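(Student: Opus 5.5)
The plan is to split the edge set of $G[H]$ into two disjoint classes, as in the proofs of Theorems~\ref{thm:cartesian} and~\ref{thm:strong}. \emph{Inner edges} $(g,h_1)(g,h_2)$ with $h_1h_2\in E(H)$ lie inside $H$-layers. \emph{Cross edges} $(g_1,h_1)(g_2,h_2)$ with $g_1g_2\in E(G)$ and $h_1,h_2\in V(H)$ arbitrary join different $H$-layers. By the definition of the lexicographic product these two classes partition $E(G[H])$. For every endpoint we use the degree formula~\eqref{lem:DegreeLexicographic}, $\deg_{G[H]}(g,h)=n_H\deg_G(g)+\deg_H(h)$.

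For inner edges, fix $i$ and $j_1\le j_2$. Each vertex $g$ with $\deg_G(g)=i$ and each edge $h_1h_2$ of $H$ of type $\{j_1,j_2\}$ give one edge whose endpoint degrees are $in_H+j_1\le in_H+j_2$. This ordering is automatic, so no $\min/\max$ is needed. Counting the choices gives $n_i(G)\,m_{j_1,j_2}(H)$ edges, each contributing the monomial $x^{j_1+in_H}y^{j_2+in_H}$. This is the first sum, which factors as $D_G\bigl((xy)^{n_H}\bigr)M(H;x,y)$, exactly as in the Cartesian case.

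For cross edges, fix an edge $g_1g_2\in E(G)$ with $\deg_G(g_1)=i_1\le i_2=\deg_G(g_2)$. Every ordered pair $(h_1,h_2)\in V(H)^2$ yields exactly one edge $(g_1,h_1)(g_2,h_2)$. Its endpoint degrees are $i_1n_H+j_1$ and $i_2n_H+j_2$, where $j_k=\deg_H(h_k)$. For a fixed ordered pair $(j_1,j_2)$ the number of such pairs $(h_1,h_2)$ is $n_{j_1}(H)n_{j_2}(H)$. Summing over the $m_{i_1,i_2}(G)$ edges of each type then gives the second sum.

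The step I expect to be the main obstacle is the exponent ordering together with the range of $(j_1,j_2)$. Since $0\le j_k\le n_H-1$, the inequality $i_1<i_2$ forces $i_1n_H+j_1<i_2n_H+j_2$ for every $j_1,j_2$, so the monomial is correctly ordered. When $i_1=i_2$ the ordering requires $j_1\le j_2$. I would argue that the ordered pairs $(j_1,j_2)$ and $(j_2,j_1)$ then give the same unordered degree type, and merge them.

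I expect a careful check to show that the displayed second sum must be read with $(j_1,j_2)$ ranging over all ordered degree pairs when $i_1<i_2$. When $i_1=i_2$, it should be read with the pair symmetrised: for $j_1<j_2$ the count is doubled, and for $j_1=j_2$ it is taken once. Here the orientation of each edge of $G$ must be fixed consistently, so that no cross edge is counted twice. Before adding the two contributions, I would verify this convention on $P_3[P_3]$ (Figure~\ref{fig:AllGraphs}) and state it explicitly.
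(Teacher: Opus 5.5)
Your decomposition into inner edges inside $H$-layers and cross edges over edges of $G$, with degrees taken from~\eqref{lem:DegreeLexicographic}, is the same as the paper's, and your first sum agrees with it. The two diverge at the step you flagged, and there you are right. The paper's proof simply asserts that the cross edges give the displayed second sum with $j_1\le j_2$, without addressing either the range or the ordering. Read literally, that formula is false, so what you describe is a correction of the theorem, not a convention for reading it.

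Two examples show this.
\begin{itemize}
\item \emph{Case $i_1=i_2$.} $K_2[P_3]$ has $13$ edges and $M(K_2[P_3];x,y)=4x^4y^4+8x^4y^5+x^5y^5$. The displayed formula returns $4x^4y^4+6x^4y^5+x^5y^5$, which accounts for only $11$ edges: the pairs with $j_1<j_2$ must be doubled.
\item \emph{Case $i_1<i_2$.} This is what your proposed test $P_3[P_3]$ exposes. The literal formula drops the ordered pair $(j_1,j_2)=(2,1)$, i.e.\ the term $4x^5y^7$, and counts $20$ instead of $24$ edges.
\end{itemize}
Since $P_3$ has no edge joining two vertices of equal degree, $P_3[P_3]$ never tests your symmetrised branch. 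Add an example such as $K_2[P_3]$ for that.

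So drop the hedging, and replace the claim that summing over edges ``gives the second sum'' by what your argument actually proves. With each edge of $G$ oriented from its lower-degree end, as you say, it proves
\begin{align*}
M(G[H];x,y)={}&D_G\bigl((xy)^{n_H}\bigr)\,M(H;x,y)+\sum_{i_1<i_2}m_{i_1,i_2}(G)\,x^{i_1n_H}y^{i_2n_H}D_H(x)\,D_H(y)\\
&+\sum_{i}m_{i,i}(G)\,(xy)^{in_H}\Bigl(\sum_{j}n_j(H)^2(xy)^{j}+2\sum_{j_1<j_2}n_{j_1}(H)\,n_{j_2}(H)\,x^{j_1}y^{j_2}\Bigr).
\end{align*}

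The paper's proof of Corollary~\ref{cor:lexico-compact} silently replaces $\sum_{j_1\le j_2}$ by the full product $D_H(x)D_H(y)$. That is exactly right for $i_1<i_2$. For $i_1=i_2$, however, it produces monomials with the larger exponent on $x$, for instance $x^{2n+2}y^{2n+1}$ in the $P_m[P_n]$ example of Section~\ref{sec:Paths}. So the compact formula holds only after identifying $x^ay^b$ with $x^by^a$, which is precisely your symmetrisation.
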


\begin{proof}
Consider first the edges from $G$-layers. Fix a vertex $g\in G$, where $i=\deg_G(g)$.  
Each edge $h_1h_2\in E(H)$ with degrees $(j_1,j_2)$ gives the edge $(g,h_1)(g,h_2)$ whose endpoint degrees are $(j_1 + i \cdot n_H,  j_2 + i \cdot n_H)$. Hence the edges from the $H$-layers altogether contribute
\[
\sum_{i}\sum_{j_1\le j_2}
n_i(G)\,m_{j_1,j_2}(H)\,
x^{j_1+i \cdot n_H} y^{j_2+i \cdot n_H}\,.
\]
The other edges of $G[H]$ connect vertices from different $H$-layers. Let $g_1g_2\in E(G)$, where $i_1 = \deg_G(g_1)$ and $i_2 = \deg_G(g_2)$, and let $h_1, h_2\in V(H)$. Then the degrees of the end-vertices of the edge $(g_1,h_1)(g_2,h_2)$ are $j_1+i_1 \cdot n_H$ and $ j_2 + i_2 \cdot n_H$. Hence the contribution of such edges is 
\[
\sum_{i_1\le i_2}\sum_{j_1\le j_2}
m_{i_1,i_2}(G)\, n_{j_1}(H)n_{j_2}(H)\,
x^{j_1+i_1 \cdot n_H} y^{j_2+i_2 \cdot n_H}.
\]
Adding the two contributions yields the stated formula.
\end{proof}

The M-polynomial of lexicographic products can also be expressed in the following compact form.

\begin{corollary}
\label{cor:lexico-compact}
If $G$ and $H$ are graphs, then 
\begin{equation}\label{eq:LexicographicCompact}
    M(G[H];x,y)
=
M(H;x,y)\, D_G\bigl((xy)^{n_H}\bigr)
\;+\;
M\bigl(G; x^{n_H}, y^{n_H}\bigr)\, D_H(x)\, D_H(y)\,  .
\end{equation}
\end{corollary}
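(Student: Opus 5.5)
The plan is to start from Theorem~\ref{thm:lexico} and factor each of its two sums separately, as was done for the Cartesian product in Theorem~\ref{thm:cartesian}. For the first sum (edges inside $H$-layers) I will use the exponent identity $x^{j_1+in_H}y^{j_2+in_H}=\bigl((xy)^{n_H}\bigr)^{i}x^{j_1}y^{j_2}$. The double sum then separates into $\sum_i n_i(G)\bigl((xy)^{n_H}\bigr)^i$ times $\sum_{j_1\le j_2}m_{j_1,j_2}(H)x^{j_1}y^{j_2}$, which is exactly $D_G\bigl((xy)^{n_H}\bigr)\,M(H;x,y)$. The only check needed here is that the exponent ordering is preserved: since $j_1\le j_2$, we have $j_1+in_H\le j_2+in_H$.

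For the second sum (edges between distinct $H$-layers) I will not factor the displayed expression directly. I will go back to the edge count in the proof of Theorem~\ref{thm:lexico}. For a fixed edge $g_1g_2\in E(G)$ with $\deg_G(g_1)=i_1\le i_2=\deg_G(g_2)$, every \emph{ordered} pair $(h_1,h_2)\in V(H)^2$ gives exactly one edge $(g_1,h_1)(g_2,h_2)$. So the correct multiplicity is $n_{j_1}(H)\,n_{j_2}(H)$ summed over \emph{all} $j_1,j_2$, not just $j_1\le j_2$. With this,
\[
\sum_{i_1\le i_2}m_{i_1,i_2}(G)\,x^{i_1n_H}y^{i_2n_H}\sum_{j_1}\sum_{j_2}n_{j_1}(H)n_{j_2}(H)x^{j_1}y^{j_2}
= M\bigl(G;x^{n_H},y^{n_H}\bigr)\,D_H(x)\,D_H(y).
\]
I will read the $j$-range in the second sum of Theorem~\ref{thm:lexico} in this unrestricted sense, which is what its proof actually counts.

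The main obstacle is the $\min/\max$ convention of the M-polynomial. If $i_1<i_2$, then $j_1\le n_H-1<n_H$ gives $j_1+i_1n_H<n_H(i_1+1)\le i_2n_H\le j_2+i_2n_H$, so the monomial $x^{j_1+i_1n_H}y^{j_2+i_2n_H}$ is already correctly ordered. If $i_1=i_2=i$, the product $D_H(x)D_H(y)$ also produces monomials $x^{a}y^{b}$ with $a>b$. I plan to handle this by pairing the ordered pairs $(h_1,h_2)$ and $(h_2,h_1)$: the coefficients of $x^ay^b$ and $x^by^a$ coincide, so the expression is symmetric in $x,y$ on these terms. I will then state that the identity holds with $M$ read symmetrically, equivalently after replacing each $x^ay^b$ with $a>b$ by $x^by^a$. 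I expect this bookkeeping for equal-degree edges of $G$ to be the only delicate point, and I will make it explicit in the write-up.
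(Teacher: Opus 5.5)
Your proposal is correct and follows the paper's route of factoring the two sums of Theorem~\ref{thm:lexico} separately. The paper's proof also passes from $\sum_{j_1\le j_2}$ to the unrestricted $\sum_{j_1}\sum_{j_2}$ in the second sum, as you do and as the edge count requires, but it does so silently (Theorem~\ref{thm:lexico} as printed restricts to $j_1\le j_2$), and it never addresses the $\min/\max$ ordering.

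Your caveat on the ordering is necessary, not cosmetic. For $G=K_2$ and $H=P_3$ the right-hand side of \eqref{eq:LexicographicCompact} is $4x^4y^4+6x^4y^5+2x^5y^4+x^5y^5$, whereas $M(K_2[P_3];x,y)=4x^4y^4+8x^4y^5+x^5y^5$. So the identity holds verbatim only when $H$ is regular or no edge of $G$ joins two vertices of equal degree. In general it holds only after folding each $x^ay^b$ with $a>b$ into $x^by^a$, exactly as you propose. The paper's own expansion of $M(P_m[P_n];x,y)$ in fact contains the unfolded term $2(m-3)(n-2)x^{2n+2}y^{2n+1}$.
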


\begin{proof}
We have
\[
\begin{aligned}
&\sum_{i}\sum_{j_1\le j_2}
n_i(G)\, m_{j_1,j_2}(H)\,
x^{j_1+i\cdot n_H} y^{j_2+i\cdot n_H} \\
&\qquad=
\sum_{i} n_i(G) (xy)^{i\cdot n_H}
\sum_{j_1\le j_2} m_{j_1,j_2}(H)\, x^{j_1} y^{j_2} \\
&\qquad=
\Bigl(\sum_{j_1\le j_2} m_{j_1,j_2}(H)\, x^{j_1} y^{j_2}\Bigr)
\Bigl(\sum_i n_i(G)\, (xy)^{i\cdot n_H}\Bigr) \\
&\qquad=
M(H;x,y)\, D_G\bigl((xy)^{n_H}\bigr).
\end{aligned}
\]
Similarly,
\[
\begin{aligned}
&\sum_{i_1\le i_2}\sum_{j_1\le j_2}
m_{i_1,i_2}(G)\, n_{j_1}(H)n_{j_2}(H)\,
x^{j_1+i_1 n_H} y^{j_2+i_2 n_H} \\
&\qquad=
\sum_{i_1\le i_2} m_{i_1,i_2}(G)\,
x^{i_1 n_H} y^{i_2 n_H}
\sum_{j_1} n_{j_1}(H) x^{j_1}
\sum_{j_2} n_{j_2}(H) y^{j_2} \\
&\qquad=
\Bigl(\sum_{i_1\le i_2} m_{i_1,i_2}(G)\,
(x^{n_H})^{i_1} (y^{n_H})^{i_2}\Bigr)
\Bigl(\sum_{j_1} n_{j_1}(H) x^{j_1}\Bigr)
\Bigl(\sum_{j_2} n_{j_2}(H) y^{j_2}\Bigr) \\
&\qquad=
M\bigl(G;x^{n_H},y^{n_H}\bigr)\, D_H(x)\, D_H(y).
\end{aligned}
\]
\medskip
Adding the two contributions yields the claimed formula.
\end{proof}

\subsection{Symmetric-Difference Product}
 \label{sec:Symmetric}

Consider next the symmetric-difference product $G\oplus H$ of graphs $G$ and $H$. If $g\in V(G)$ and $h\in V(H)$, where $\deg_G(g)=i$ and $\deg_H(h)=j$, then by~\eqref{lem:DegreeSymmetric}, 
\begin{equation}
    \deg_{G\oplus H}(g,h) = i(n_H-j)+j(n_G-i)= i\cdot n_H+j\cdot n_G-2ij.
\end{equation}
Since this value is independent of the selection of a vertex of $G$ of degree $i$ and a vertex of $H$ of degree $j$, we set 
$$\delta_{\oplus}^{G\oplus H}(i,j)\;=\;\deg_{G\oplus H}(g,h)\,.$$
If $G$ and $H$ are clear from the context, we may simplify the notation $\delta_{\oplus}^{G\oplus H}(i,j)$ to $\delta_{\oplus}(i,j)$. 

\begin{theorem}
\label{thm:MP-XOR}
If $G$ and $H$ are graphs, then 
\begin{equation}\label{eq:MPolySymmetric}
    \begin{aligned}
M(G\oplus H;x,y)
={}&
\sum_{ i\le i' }\ \sum_{j,j'}
m_{i,i'}(G) \widehat m_{j,j'}(H) 
x^{\min\{\delta_{\oplus}(i,j), \delta_{\oplus}(i',j')\}}
y^{\max\{\delta_{\oplus}(i,j), \delta_{\oplus}(i',j')\}}
\\
+&
\sum_{j\le j'}\ \sum_{i,i'}
m_{j,j'}(H) \widehat m_{i,i'}(G) 
x^{\min\{\delta_{\oplus}(i,j), \delta_{\oplus}(i',j')\}}
y^{\max\{\delta_{\oplus}(i,j), \delta_{\oplus}(i',j')\}}\,.
\end{aligned}
\end{equation}
\end{theorem}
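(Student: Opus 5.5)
The plan is to partition the edge set of $G\oplus H$ according to which factor supplies the adjacency. Then each part is counted by pairing an edge of one factor with an \emph{ordered} nonadjacent pair of the other, which is exactly what $\widehat m$ records. Recall that $(g,h)(g',h')\in E(G\oplus H)$ if and only if exactly one of $gg'\in E(G)$ and $hh'\in E(H)$ holds. We therefore write
\[
E(G\oplus H)=E_A\,\dot\cup\,E_B ,
\]
where $E_A$ consists of the edges with $gg'\in E(G)$ and $hh'\notin E(H)$, and $E_B$ of those with $hh'\in E(H)$ and $gg'\notin E(G)$. These two sets are disjoint by the exclusive-or condition. In $E_A$ we allow $h=h'$, since $hh\notin E(H)$; this is why the definition of $\widehat m_{j,j'}$ permits $g=g'$. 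Similarly, $E_B$ contains the pairs with $g=g'$.

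For $E_A$, I would fix once and for all an orientation of every edge of $G$: write it as $(g,g')$ with $\deg_G(g)=i\le i'=\deg_G(g')$, choosing arbitrarily when $i=i'$. I then define a map from pairs (oriented edge $(g,g')$, ordered pair $(h,h')\in V(H)^2$ with $hh'\notin E(H)$) to $E_A$ by sending it to the edge $(g,h)(g',h')$. Every edge of $E_A$ arises this way. Injectivity holds because $g\ne g'$: the unordered pair $\{(g,h),(g',h')\}$ together with the fixed orientation of $gg'$ tells us which $H$-coordinate belongs to $g$, so it recovers $(h,h')$ uniquely.

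Now fix $i\le i'$ and $(j,j')$. There are $m_{i,i'}(G)\,\widehat m_{j,j'}(H)$ such pairs. By the degree formula~\eqref{lem:DegreeSymmetric}, the endpoint degrees of each resulting edge are $\delta_{\oplus}(i,j)$ and $\delta_{\oplus}(i',j')$, independent of the particular vertices chosen. Hence each such edge contributes $x^{\min}y^{\max}$ of these two values. Summing over $i\le i'$ and all ordered $(j,j')$ gives the first sum in~\eqref{eq:MPolySymmetric}.

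The set $E_B$ is handled symmetrically. Orient each edge $hh'$ of $H$ with $\deg_H(h)=j\le j'=\deg_H(h')$, and pair it with ordered nonadjacent pairs $(g,g')$ of $G$ of degree type $(i,i')$; here $g=g'$ is allowed. The endpoints $(g,h)$ and $(g',h')$ then have degrees $\delta_{\oplus}(i,j)$ and $\delta_{\oplus}(i',j')$, which yields the second sum. Adding the contributions of $E_A$ and $E_B$ proves the theorem.

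The main point requiring care is the bijectivity in the counting step. The orientation must be fixed in advance, especially when $i=i'$; otherwise each edge would be counted twice. The $H$-side pairs must be ordered, and pairs with $h=h'$ (respectively $g=g'$) must be included. Both requirements are exactly matched by the definition of $\widehat m$, so no correction factors arise.
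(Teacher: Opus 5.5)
Your proposal is correct and follows the same route as the paper. Both split $E(G\oplus H)$ according to which factor supplies the adjacency. Each class is then counted as an edge of one factor paired with an ordered nonadjacent (possibly equal) pair of vertices of the other, with endpoint degrees read off from~\eqref{lem:DegreeSymmetric}. Your fixed orientation of each factor edge makes precise the bijection that the paper's proof leaves implicit, in particular for edges with $i=i'$.
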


\begin{proof}
In $G\oplus H$, two distinct vertices $(g,h)$ and $(g',h')$ are adjacent if exactly one of $gg'\in E(G)$ and $hh'\in E(H)$ holds. 

Assume first that $gg'\in E(G)$ and $hh'\notin E(H)$, where the endpoints of the edge $gg'$ have degrees $(i,i')$, and the endpoints of $hh'$ have degrees $(j,j')$. The endpoints of the edge $(g,h)(g',h')$ of $G\oplus H$ have degrees $\delta_{\oplus}(i,j)$ and $\delta_{\oplus}(i',j')$. The number of such ordered pairs $(h,h')$ is $\widehat m_{j,j'}(H)$, and there are $m_{i,i'}(G)$ choices for $gg'$. This gives the first summation of~\eqref{eq:MPolySymmetric}. By the symmetry, the second summation is obtained for the case when $gg'\notin E(G)$ and $hh'\in E(H)$. Summing $x^{\min}y^{\max}$ over all edges gives the stated
formula.
\end{proof}

We next demonstrate that  $\widehat m_{i,j}(G)$ can be expressed in terms of the classical coefficients of the M-polynomial of the complement. This relation allows a reformulation of Theorem~\ref{thm:MP-XOR} purely in terms of classical degree-type numbers. To state the reformulation, we first define the indicator function by
\[
\mathbf{1}_{\{i=j\}} :=
\begin{cases}
1; & i=j,\\
0; & i\neq j.
\end{cases}
\]

\begin{proposition}
\label{prop:ordered-nonedges-complement}
If $G$ is a graph, and $i$ and $j$ are degrees of some vertices of $G$, then
\begin{equation}\label{eq:mijComplementExpression}
    \widehat m_{i,j}(G)
=
2 m_{ n_G-1-i,\;n_G-1-j}(\overline G)
+\mathbf 1_{\{i=j\}} n_i(G).
\end{equation}
\end{proposition}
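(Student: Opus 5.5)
The plan is a double count that transfers non-adjacent pairs of $G$ to edges of $\overline G$, treating the diagonal separately. I split the set counted by $\widehat m_{i,j}(G)$ into two disjoint parts: the \emph{diagonal} pairs $(g,g)$, and the \emph{off-diagonal} pairs $(g,g')$ with $g\neq g'$ and $gg'\notin E(G)$. I count each part separately and show that the two counts give the two summands of \eqref{eq:mijComplementExpression}.

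\emph{Diagonal part.} The graph is simple, so $gg\notin E(G)$ for every $g$, and $(g,g)$ has degree type $(\deg_G(g),\deg_G(g))$. Such a pair can have type $(i,j)$ only if $i=j$, and then there are exactly $n_i(G)$ of them. This gives the term $\mathbf 1_{\{i=j\}}\,n_i(G)$.

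\emph{Off-diagonal part.} For distinct $g,g'$, the pair is non-adjacent in $G$ exactly when $gg'\in E(\overline G)$. Degrees translate by $\deg_{\overline G}(g)=n_G-1-\deg_G(g)$, so the condition $(\deg_G(g),\deg_G(g'))=(i,j)$ is the same as $(\deg_{\overline G}(g),\deg_{\overline G}(g'))=(n_G-1-i,\,n_G-1-j)$. Every edge $\{g,g'\}$ of $\overline G$ with complement degree type $\{n_G-1-i,\,n_G-1-j\}$ therefore contributes through its two orientations $(g,g')$ and $(g',g)$. There are $m_{n_G-1-i,\,n_G-1-j}(\overline G)$ such edges, which produces the factor $2$ and the term $2m_{n_G-1-i,\,n_G-1-j}(\overline G)$. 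Adding the two disjoint parts gives \eqref{eq:mijComplementExpression}.

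\emph{Main obstacle.} The delicate step is matching the two orientations of a complement edge to the ordered degree type $(i,j)$.
\begin{itemize}
\item For $i=j$ there is no difficulty: both orientations have type $(i,i)$, so the factor $2$ is exact.
\item For $i\neq j$, only one orientation has type exactly $(i,j)$; the other has type $(j,i)$. So the factor $2$ holds when $(g,g')$ and $(g',g)$ are both attributed to the pair $\{i,j\}$, i.e.\ when $\widehat m_{i,j}(G)$ is understood, as in its use in Theorem~\ref{thm:MP-XOR}, as recording both orientations of each non-adjacent pair.
\end{itemize}
I would make this attribution explicit and then verify it on a small example such as $P_3$ before finalizing. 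If $\widehat m_{i,j}(G)$ is instead read as counting only pairs of the exact ordered type $(i,j)$, this verification will show that for $i\neq j$ the count equals $m_{n_G-1-i,\,n_G-1-j}(\overline G)$, not twice that. In that case the statement needs that interpretation of $\widehat m_{i,j}(G)$ to hold for $i\neq j$.
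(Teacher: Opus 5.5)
Your split into diagonal and off-diagonal pairs, followed by the passage to $\overline G$ via $\deg_{\overline G}(g)=n_G-1-\deg_G(g)$, is exactly the paper's argument, and your diagonal count and your $i=j$ case are fine. The obstacle you flag, however, is not a matter of interpretation, and it cannot be argued away. The paper defines $\widehat m_{i,j}(G)$ by the exact ordered condition $(\deg_G(g),\deg_G(g'))=(i,j)$, and with that definition the statement is false for $i\neq j$.

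An edge $\{g,g'\}$ of $\overline G$ with $\deg_G(g)=i\neq j=\deg_G(g')$ contributes only $(g,g')$, because $(g',g)$ has type $(j,i)$. The paper's proof makes exactly this slip when it places both $(g_1,g_2)$ and $(g_2,g_1)$ in $S_{i,j}^{\neq}$.

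Concretely, take $G=P_4$ with vertices $v_1v_2v_3v_4$ in order, and $(i,j)=(1,2)$. The non-adjacent ordered pairs of type $(1,2)$ are $(v_1,v_3)$ and $(v_4,v_2)$, so $\widehat m_{1,2}(P_4)=2$. On the other hand, $E(\overline{P_4})=\{v_1v_3,v_1v_4,v_2v_4\}$ with complement degrees $2,1,1,2$, so $m_{2,1}(\overline{P_4})=2$ and the right-hand side of \eqref{eq:mijComplementExpression} is $4$. Your planned check on $P_3$ would not reveal this, since $P_3$ has no non-adjacent pair of distinct vertices with different degrees.

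What your argument actually proves is
\[
\widehat m_{i,j}(G)=\bigl(1+\mathbf 1_{\{i=j\}}\bigr)\,m_{n_G-1-i,\;n_G-1-j}(\overline G)+\mathbf 1_{\{i=j\}}\,n_i(G),
\]
which coincides with the stated formula only when $i=j$.

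Do not repair this by redefining $\widehat m_{i,j}$ to record both orientations, i.e.\ to count ordered non-adjacent pairs whose degree \emph{set} is $\{i,j\}$. That reading does make \eqref{eq:mijComplementExpression} true for all $i,j$. Contrary to your parenthetical remark, though, it is not how Theorem~\ref{thm:MP-XOR} uses $\widehat m$. There the inner sum runs over all ordered $(j,j')$, and the exponents $\delta_\oplus(i,j)$, $\delta_\oplus(i',j')$ depend on that order. Each product edge $(g,h)(g',h')$ is counted once, through the ordered pair $(h,h')$ of exact type $(j,j')$.

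With the symmetrized count, every product edge coming from a non-adjacent pair $h,h'$ with $\deg_H(h)\neq\deg_H(h')$ would be counted a second time. That second copy would carry the exponents of the swapped degree type, which are in general incorrect. The right conclusion is that the proposition, and the corresponding step of the paper's proof, must be amended to the displayed identity or restricted to $i=j$; the definition should not be changed.
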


\begin{proof}
Fix $i$ and $j$,  and consider the set
\[
S_{i,j}=\{(g_1,g_2)\in V(G)^2:\ g_1g_2\notin E(G),\ (\deg_G(g_1),\deg_G(g_2))=(i,j)\}.
\]
Split $S_{i,j}$ into disjoint subsets $S_{i,j}^{\neq}=\{(g_1,g_2)\in S_{i,j}: g_1\neq g_2\}$ and $S_{i,j}^{=}=\{(g_1,g_2)\in S_{i,j}\}$.

Consider first the case when $g_1\neq g_2$. For $(g_1,g_2)\in S_{i,j}^{\neq}$, the condition $g_1g_2\notin E(G)$ is equivalent to
$g_1g_2\in E(\overline G)$. Moreover, for every vertex $g_1$,
\[
\deg_{\overline G}(g_1)=n_G-1-\deg_G(g_1),
\]
so $(\deg_G(g_1),\deg_G(g_2))=(i,j)$ is equivalent to
$(\deg_{\overline G}(g_1),\deg_{\overline G}(g_2))=(n_G-1-i,\;n_G-1-j)$.
Hence unordered edges $\{g_1,g_2\}\in E(\overline G)$ of degree-type
$\{n_G-1-i,\;n_G-1-j\}$ correspond bijectively to pairs of ordered
elements $(g_1,g_2)$ and $(g_2,g_1)$ in $S_{i,j}^{\neq}$. Therefore,
\[
|S_{i,j}^{\neq}|=2 m_{ n_G-1-i,\;n_G-1-j}(\overline G).
\]

Consider second the case $g_1=g_2$. We have $(g_1,g_2)\in S_{i,j}^{=}$ if and only if $i=j=\deg_G(g_1)$.
Thus $|S_{i,j}^{=}|=\mathbf 1_{\{i=j\}} n_i(G)$.
 
Summing the two cases yields \eqref{eq:mijComplementExpression} as claimed.
\end{proof}

\subsection{Disjunction Product}
 \label{sec:Disjunction}

In the disjunction product $G\vee H$ of graphs $G$ and $H$, if $g\in V(G)$ has $\deg_G(g)=i$, and $h\in V(H)$ has $\deg_H(h)=j$, then by~\eqref{lem:DegreeDisjunction}, 
\begin{equation*}
    \deg_{G\vee H}(g,h) = i\cdot n_H+j\cdot n_G - ij.
\end{equation*}
We set 
$$\delta_{\vee}^{G\vee H}(i,j)\;=\;\deg_{G\vee H}(g,h)\,.$$
and when $G$ and $H$ are clear from the context, we may simplify the notation $\delta_{\vee}^{G\vee H}(i,j)$ to $\delta_{\vee}(i,j)$. 

\begin{theorem}
\label{prop:M-disjunction}
If $G$ and $H$ are graphs, then 
\begin{equation}\label{eq:MPolyDisjunction}
    \begin{aligned}
M(G\vee H;x,y)
={}&
\sum_{i\le i'}\ \sum_{j,j'}
m_{i,i'}(G) n_j(H) n_{j'}(H) 
x^{\min\{\delta_\vee(i,j), \delta_\vee(i',j')\}}
y^{\max\{\delta_\vee(i,j), \delta_\vee(i',j')\}}
\\
&+
\sum_{j\le j'}\ \sum_{i,i'}
m_{j,j'}(H) n_i(G) n_{i'}(G) 
x^{\min\{\delta_\vee(i,j), \delta_\vee(i',j')\}}
y^{\max\{\delta_\vee(i,j), \delta_\vee(i',j')\}}
\\
&-
\sum_{i\le i'}\ \sum_{j\le j'}
m_{i,i'}(G) m_{j,j'}(H) 
\Bigl(
x^{\min\{\delta_\vee(i,j), \delta_\vee(i',j')\}}
y^{\max\{\delta_\vee(i,j), \delta_\vee(i',j')\}}
\\
&\hspace{50mm}+
x^{\min\{\delta_\vee(i,j'), \delta_\vee(i',j)\}}
y^{\max\{\delta_\vee(i,j'), \delta_\vee(i',j)\}}
\Bigr).
\end{aligned}
\end{equation}
\end{theorem}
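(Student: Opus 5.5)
The plan is inclusion--exclusion on the edge set of $G\vee H$. By definition, $(g,h)(g',h')$ is an edge exactly when $gg'\in E(G)$ or $hh'\in E(H)$. So I set $A=\{\{(g,h),(g',h')\}: gg'\in E(G)\}$ and $B=\{\{(g,h),(g',h')\}: hh'\in E(H)\}$. Then $E(G\vee H)=A\cup B$ and
\[
M(G\vee H;x,y)=\sum_{e\in A}w(e)+\sum_{e\in B}w(e)-\sum_{e\in A\cap B}w(e),
\]
where $w(e)=x^{\min}y^{\max}$ of the endpoint degrees. By~\eqref{lem:DegreeDisjunction}, a vertex $(g,h)$ with $\deg_G(g)=i$ and $\deg_H(h)=j$ has degree $\delta_\vee(i,j)$, which depends only on $(i,j)$. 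So every weight is determined by the degree data of the factors, and the proof reduces to counting.

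For $A$: each edge $gg'\in E(G)$ with endpoint degrees $i\le i'$ (orient it so $\deg g=i$, $\deg g'=i'$), together with an arbitrary ordered choice $(h,h')\in V(H)^2$, gives exactly one edge $(g,h)(g',h')$. The case $h=h'$ is allowed and gives a $G$-layer edge. This correspondence is a bijection because $g\neq g'$ fixes which endpoint carries $g$. The number of pairs with $\deg h=j$ and $\deg h'=j'$ is $n_j(H)n_{j'}(H)$. This yields the first sum.

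One subtlety needs checking here. When $i=i'$, orienting $gg'$ is ambiguous, but summing over all ordered $(j,j')$ with the $\min/\max$ exponent still counts each edge once: fix one orientation per edge, and the pair $(h,h')$ is then ranged over ordered. The count is therefore correct. The set $B$ is treated symmetrically and gives the second sum.

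For $A\cap B$: here $gg'\in E(G)$ and $hh'\in E(H)$, which are precisely the direct-product edges. As in the proof of Theorem~\ref{thm:tensor-minmax}, each pair of edges $gg'$ (degrees $i\le i'$) and $hh'$ (degrees $j\le j'$) produces exactly two edges, $(g,h)(g',h')$ and $(g,h')(g',h)$. Their endpoint degrees are $\{\delta_\vee(i,j),\delta_\vee(i',j')\}$ and $\{\delta_\vee(i,j'),\delta_\vee(i',j)\}$, which gives the subtracted term with multiplicity $m_{i,i'}(G)m_{j,j'}(H)$.

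The main obstacle is the bookkeeping in the first two sums: making sure the ordered sum over $(j,j')$ against unordered edge classes $i\le i'$ neither double-counts nor misses edges, particularly when $i=i'$ or $j=j'$. I would settle this by fixing an orientation of each edge of $G$ (respectively $H$) once, and then verifying the bijection between edges in $A$ and triples consisting of an oriented edge and an ordered pair of vertices.
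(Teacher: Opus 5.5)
Your proposal is correct and follows essentially the same route as the paper: inclusion--exclusion over $E(G\vee H)=A\cup B$ with the degree formula~\eqref{lem:DegreeDisjunction}. The paper counts $A$ and $B$ via $n_j(H)n_{j'}(H)$ (resp.\ $n_i(G)n_{i'}(G)$) vertex choices per factor edge, and $A\cap B$ via the two direct-type edges $\{(g,h),(g',h')\}$ and $\{(g,h'),(g',h)\}$ per pair of factor edges, exactly as you do. Your explicit orientation argument for the bijection, including the cases $i=i'$ and $h=h'$, is a small extra piece of rigor that the paper leaves implicit.
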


\begin{proof}
By the definition of $G\vee H$ we have $E(G\vee H)=E_A\cup E_B$, 
where
\begin{align*}
E_A & = \bigl\{\{(g,h),(g',h')\}:\ gg'\in E(G)\bigr\}\,, \\
E_B & = \bigl\{\{(g,h),(g',h')\}:\ hh'\in E(H)\bigr\}\,.
\end{align*}
Moreover,
\[
E_A\cap E_B
=\bigl\{\{(g,h),(g',h')\}:\ gg'\in E(G), hh'\in E(H)\bigr\}\,.
\]
Fix now an edge $gg'\in E(G)$ whose endpoint degrees in $G$ are $(i,i')$ (with $i\le i'$).
In $E_A$, the second coordinates are arbitrary, so for any choice of
$h,h'\in V(H)$ with $\deg_H(h)=j$ and $\deg_H(h')=j'$, the unordered pair
$\{(g,h),(g',h')\}$ is an edge of $E_A$.
The number of such choices is $n_j(H) n_{j'}(H)$.
For each such edge, the endpoint degrees in $G\vee H$ are
$\delta_\vee(i,j)$ and $\delta_\vee(i',j')$, hence its M-monomial is
\[
x^{\min\{\delta_\vee(i,j),\delta_\vee(i',j')\}}
y^{\max\{\delta_\vee(i,j),\delta_\vee(i',j')\}}.
\]
Summing over all edges of $G$ of type $(i,i')$, of which there are $m_{i,i'}(G)$,
and over all $(j,j')$, gives exactly the first double sum in the statement, which is
$M_A=\sum_{e\in E_A}x^{\min}y^{\max}$. By the symmetry, fixing an edge $hh'\in E(H)$ yields the second sum. 

Consider next the contribution of the intersection $E_A\cap E_B$. 
Fix an edge $gg'\in E(G)$ of type $(i,i')$ and an edge $hh'\in E(H)$ of type $(j,j')$.
There are \emph{exactly two} distinct edges in $E_A\cap E_B$ generated by these two edges:
\[
\{(g,h),(g',h')\}
\quad\text{and}\quad
\{(g,h'),(g',h)\}.
\]
(They are distinct because $h\neq h'$ and $g\neq g'$.)
Their endpoint degree pairs in $G\vee H$ are
\[
\bigl(\delta_\vee(i,j),\delta_\vee(i',j')\bigr)
\quad\text{and}\quad
\bigl(\delta_\vee(i,j'),\delta_\vee(i',j)\bigr),
\]
hence their $M$-monomials are the two terms inside the parentheses in the third sum.
There are $m_{i,i'}(G)m_{j,j'}(H)$ ways to pick the underlying edges $gg'$ and $hh'$,
so summing over all $(i,i')$ and $(j,j')$ yields
\[
M_I=\sum_{e\in E_A\cap E_B}x^{\min}y^{\max}\,,
\]
which is equal to the third (subtracted) sum in the statement.

Since $E(G\vee H)=E_A\cup E_B$ and $E_A,E_B$ are finite,
\[
\sum_{e\in E(G\vee H)}x^{\min}y^{\max}
=
\sum_{e\in E_A}x^{\min}y^{\max}
+
\sum_{e\in E_B}x^{\min}y^{\max}
-
\sum_{e\in E_A\cap E_B}x^{\min}y^{\max},
\]
that is, $M(G\vee H;x,y)=M_A+M_B-M_I$, which is precisely the formula claimed.
\qedhere
\end{proof}

\subsection{Sierpi\'{n}ski Product}
\label{sec:Sierpi}

\begin{theorem}
\label{thm:sierpinski-M}
If $G$ and $H$ are graphs and $f:V(G)\to V(H)$, then 
\begin{equation}
    M(G\otimes_f H;x,y)=M^{(\mathrm{inner})}(x,y)+M^{(\mathrm{conn})}(x,y)\,,
\end{equation}
where
\begin{align}
    M^{(\mathrm{inner})}(x,y)
& =
\sum_{g\in V(G)}\ \sum_{\{h,h'\}\in E(H)}
x^{\min\{d_{g,h},\,d_{g,h'}\}}\,
y^{\max\{d_{g,h},\,d_{g,h'}\}}\,,\\
M^{(\mathrm{conn})}(x,y)
& =
\sum_{\{g,g'\}\in E(G)}
x^{\min\{d_{g,f(g')},\,d_{g',f(g)}\}}\,
y^{\max\{d_{g,f(g')},\,d_{g',f(g)}\}},
\end{align}
and $d_{u,v}=\deg_{G\otimes_f H}(u,v)$ for all $(u,v)\in V(G\otimes_f H)$.
\end{theorem}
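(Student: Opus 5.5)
The formula follows once we show that the edge set of $G\otimes_f H$ is the disjoint union of the inner edges and the connecting edges, and that each edge is listed exactly once in the corresponding sum. Since $M(G\otimes_f H;x,y)=\sum_{e=uv\in E(G\otimes_f H)} x^{\min\{\deg(u),\deg(v)\}}y^{\max\{\deg(u),\deg(v)\}}$, we may simply sum this monomial over the two classes. The degrees $d_{u,v}$ are those given by~\eqref{lem:DegreeSierpinski}, although their explicit form is not even needed here.

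\emph{Inner edges.} By definition, these are the pairs $(g,h)(g,h')$ with $g\in V(G)$ and $hh'\in E(H)$. The map $(g,\{h,h'\})\mapsto \{(g,h),(g,h')\}$ is injective, because the first coordinate recovers $g$ and the second coordinates recover $\{h,h'\}$. Summing the monomial over the image therefore gives exactly $M^{(\mathrm{inner})}(x,y)$.

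\emph{Connecting edges.} These are the pairs $(g,f(g'))(g',f(g))$ with $gg'\in E(G)$. The two endpoints have distinct first coordinates $g\ne g'$, so each such edge is a genuine edge and is never an inner edge, since inner edges have equal first coordinates. The map $\{g,g'\}\mapsto\{(g,f(g')),(g',f(g))\}$ is well defined on unordered pairs, because swapping $g$ and $g'$ yields the same pair. It is also injective, because the set of first coordinates of the endpoints is $\{g,g'\}$. Hence each connecting edge is counted exactly once, and summing gives $M^{(\mathrm{conn})}(x,y)$.

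Adding the two contributions gives the claimed formula. The only delicate point is this disjointness and injectivity bookkeeping for connecting edges: each unordered edge of $G$ must produce exactly one edge of the product, with no double counting from the two orientations. This is settled by noting that the defining expression is symmetric under $g\leftrightarrow g'$. Everything else is a direct rewriting of the definition of the M-polynomial.
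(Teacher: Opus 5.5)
Your proof is correct and follows the paper's argument: split $E(G\otimes_f H)$ into inner and connecting edges and sum the monomial over each class. Your injectivity and disjointness checks (first coordinates separate the two classes and recover $g$, respectively $\{g,g'\}$) spell out what the paper takes for granted when it asserts that the union is disjoint and that each edge appears once in each sum.
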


\begin{proof}
By the definition of the Sierpi\'{n}ski product, $E(G\otimes_f H)$ is the disjoint union of $E_{\mathrm{inner}}$ and $E_{\mathrm{conn}}$, where 
\begin{align*}
    E_{\mathrm{inner}}
&=\bigl\{\{(g,h),(g,h')\}: g\in V(G),\ \{h,h'\}\in E(H)\bigr\},\\
E_{\mathrm{conn}}
&=\bigl\{\{(g,f(g')),(g',f(g))\}: \{g,g'\}\in E(G)\bigr\}.
\end{align*}
Using the standard symmetric definition,
\begin{equation}
    M(G\otimes_f H;x,y)=\sum_{\{u,v\}\in E(G\otimes_f H)}
x^{\min\{\deg_{G\otimes_f H}(u),\deg_{G\otimes_f H}(v)\}}\,
y^{\max\{\deg_{G\otimes_f H}(u),\deg_{G\otimes_f H}(v)\}}\,.
\end{equation}
Since $E(G\otimes_f H)$ is a disjoint union of $E_{\mathrm{inner}}$ and $E_{\mathrm{conn}}$,
we have
\[
\begin{aligned}
    M(G\otimes_f H;x,y)&=
\sum_{\{u,v\}\in E_{\mathrm{inner}}} x^{\min\{\deg_{G\otimes_f H}(u),\deg_{G\otimes_f H}(v)\}}y^{\max\{\deg_{G\otimes_f H}(u),\deg_{G\otimes_f H}(v)\}}\\
 &+ 
\sum_{\{u,v\}\in E_{\mathrm{conn}}} x^{\min\{\deg_{G\otimes_f H}(u),\deg_{G\otimes_f H}(v)\}}y^{\max\{\deg_{G\otimes_f H}(u),\deg_{G\otimes_f H}(v)\}}.
\end{aligned}
\]
Recall that degrees in $G\otimes_f H$ are given in~\eqref{lem:DegreeSierpinski}.  
An inner edge has the form $\{(g,h),(g,h')\}$ with $\{h,h'\}\in E(H)$. Hence its monomial
contribution equals
\[
x^{\min\{d_{g,h},d_{g,h'}\}}\,y^{\max\{d_{g,h},d_{g,h'}\}}.
\]
Summing over all $g\in V(G)$ and all $\{h,h'\}\in E(H)$ gives $M^{(\mathrm{inner})}(x,y)$.
 
A connecting edge corresponding to $\{g,g'\}\in E(G)$ has the form
$\{(g,f(g')),(g',f(g))\}$. Hence
the monomial contribution equals
\[
x^{\min\{d_{g,f(g')},\,d_{g',f(g)}\}}\,
y^{\max\{d_{g,f(g')},\,d_{g',f(g)}\}}.
\]
Summing over all $\{g,g'\}\in E(G)$ gives $M^{(\mathrm{conn})}(x,y)$.

Therefore, $M(G\otimes_f H;x,y)=M^{(\mathrm{inner})}(x,y)+M^{(\mathrm{conn})}(x,y)$.
\end{proof}

\section{Products of Paths}
\label{sec:Paths}

To illustrate formulas from the previous section, we consider here products of paths. Unless stated otherwise, we assume that paths considered are of order at least three. For the path $P_n$ ($n\ge 3$) itself we clearly have 
\begin{align*}
D_{P_n}(t) & = 2t+(n-2)t^2\,, \\
M(P_n;x,y) & = 2xy^2+(n-3)x^2y^2\,.
\end{align*}
We consider products of paths in the same order as in the previous section. 
\begin{itemize}

\item\textbf{Cartesian product}. For the grid graphs $P_m\cp P_n$, using Theorem~\ref{thm:cartesian}, we obtain 
\[
\begin{aligned}
M(P_m\cp P_n;x,y)
&=
\bigl(2xy+(m-2)(xy)^2\bigr)\bigl(2xy^2+(n-3)x^2y^2\bigr)\\
&\quad+
\bigl(2xy+(n-2)(xy)^2\bigr)\bigl(2xy^2+(m-3)x^2y^2\bigr)\,.
\end{aligned}
\] 

\item\textbf{Direct product}. Since $m_{1,2}(P_m)=2$ and $m_{2,2}(P_m)=m-3$, 
Theorem~\ref{thm:tensor-minmax} yields
\[
M(P_m\times P_n;x,y)
=
4x y^{4}
+
4x^{2}y^{2}
+
4(m+n-6)x^{2}y^{4}
+
2(m-3)(n-3)x^{4}y^{4}.
\]

\item\textbf{Strong product}. For the king graphs $P_m\boxtimes P_n$, using Theorem~\ref{thm:strong} we obtain 
\[
\begin{aligned}
M(P_m\boxtimes P_n;x,y)
&=
8x^3y^5
+
4x^3y^8
+
2(m+n-4)x^5y^5+
(6m+6n-32)x^5y^8 \\
& 
+
(4mn-11m-11n+30)x^8y^8.
\end{aligned}
\]

\item\textbf{Lexicographic product}. By Corollary~\ref{cor:lexico-compact}, we have  
\[
\begin{aligned}
M(P_m[P_n];x,y)
&=\bigl(2xy^2+(n-3)x^2y^2\bigr)\Bigl(2(xy)^n+(m-2)(xy)^{2n}\Bigr)\\
&\quad+\Bigl(2x^ny^{2n}+(m-3)x^{2n}y^{2n}\Bigr)\Bigl(2x+(n-2)x^2\Bigr)\Bigl(2y+(n-2)y^2\Bigr).
\end{aligned}
\]
Thus
\[
\begin{aligned}
M(P_m[P_n];x,y)
&=4x^{n+1}y^{n+2}
+2(m-2)x^{2n+1}y^{2n+2}
+2(n-3)x^{n+2}y^{n+2} \\
&\quad
+(n-3)(m-2)x^{2n+2}y^{2n+2} \\
&\quad
+8x^{n+1}y^{2n+1}
+4(n-2)x^{n+1}y^{2n+2}
+4(n-2)x^{n+2}y^{2n+1} \\
&\quad
+2(n-2)^2x^{n+2}y^{2n+2} \\
&\quad
+4(m-3)x^{2n+1}y^{2n+1}
+2(m-3)(n-2)x^{2n+1}y^{2n+2} \\
&\quad
+2(m-3)(n-2)x^{2n+2}y^{2n+1} \\
&\quad
+(m-3)(n-2)^2x^{2n+2}y^{2n+2}.
\end{aligned}
\]

\item\textbf{Symmetric-Difference Product}. 
By Theorem \ref{thm:MP-XOR}, we have 
\[
\begin{aligned}
M(P_m\oplus P_n;x,y) & = 
8x^{A}y^{B}+8x^{A}y^{C}+4(m+n-6)x^{A}y^{D}\\
&\quad  +4(m+n-6) x^{\min\{B,C\}}y^{\max\{B,C\}}\\
&\quad  +\Bigl(2n^2-12n+20+4(m-3)(n-3)\Bigr) x^{\min\{B,D\}}y^{\max\{B,D\}}\\
&\quad  +\Bigl(2m^2-12m+20+4(m-3)(n-3)\Bigr) x^{\min\{C,D\}}y^{\max\{C,D\}}\\
&\quad  +4(m-3)x^{C}y^{C}+4(n-3)x^{B}y^{B}\\
&\quad  +\Bigl((m-3)(n^2-6n+10)+(n-3)(m^2-6m+10)\Bigr) x^{D}y^{D},
\end{aligned}
\]
with $A:=\delta_\oplus^{P_m\oplus P_n}(1,1)=m+n-2$, $B:=\delta_\oplus^{P_m\oplus P_n}(1,2)=n+2m-4$, $C:=\delta_\oplus^{P_m\oplus P_n}(2,1)=m+2n-4$, and $D:=\delta_\oplus^{P_m\oplus P_n}(2,2)=2m+2n-8$.

\item\textbf{Disjunction Product}. By Theorem \ref{prop:M-disjunction}, we have 
\[
\begin{aligned}
M(P_m\vee P_n;x,y)= &
8x^{A}y^{B}+8x^{A}y^{C}
+4(m+n-5)x^{A}y^{D}
\\
&+4(m+n-5)x^{\min\{B,C\}}y^{\max\{B,C\}}\\
& +\bigl(4mn-12m+2n^2-20n+44\bigr)x^{B}y^{D}
\\
&+\bigl(2m^2+4mn-20m-12n+44\bigr)x^{C}y^{D}\\
&  +4(n-3)x^{B}y^{B}+4(m-3)x^{C}y^{C}\\
& +\bigl(m^2n+mn^2-10mn-3m^2-3n^2+22m+22n-42\bigr)x^{D}y^{D},
\end{aligned}
\]
with $A:=\delta_\vee(1,1)=m+n-1$, $B:=\delta_\vee(1,2)=2m+n-2$, $C:=\delta_\vee(2,1)=m+2n-2$, and $D:=\delta_\vee(2,2)=2m+2n-4$.

\item\textbf{Sierpi\'{n}ski Product}. Using Theorem~\ref{thm:sierpinski-M}, we determine the M-polynomial of the Sierpi\'{n}ski product in two basic cases of the function $f$. 
\begin{enumerate}

\item \textit{Constant map.} Assume that $f(i)=1$ for all $i\in [m]$.  Then
\[
M(P_m\otimes_f P_n;x,y)
=
mxy^2+\bigl(m(n-3)+2\bigr)x^2y^2
+mx^2y^3+(m-3)x^3y^3.
\]
\item  \textit{Identity map:} Assume that $m\le n$ and $f(i)=i$ for all $i\in [m]$. Then
     \[
M(P_m\otimes_f P_n;x,y)
= a_{1,2} x y^{2}
+ a_{1,3} x y^{3}
+ a_{2,2} x^{2} y^{2}
+ a_{2,3} x^{2} y^{3}
+ a_{3,3} x^{3} y^{3},
\]
where the coefficients are given  by
\[
(a_{1,2},a_{1,3},a_{2,2},a_{2,3},a_{3,3})=
\begin{cases}
\bigl(2m-6, 4, m^{2}-7m+14, 4m-10, m-3\bigr), & n=m,\\[2mm]
\bigl(2m-4, 3, m^{2}-6m+10, 4m-8, m-2\bigr), & n=m+1,\\[2mm]
\bigl(2m-3, 2, mn-7m+9, 4m-7, m-2\bigr), & n\ge m+2.
\end{cases}
\] 
\end{enumerate}
\end{itemize}

\section{Conclusion and Perspectives}
\label{sec:Conclusion}

In this paper, we developed a systematic framework for computing the M-polynomial under graph products in which the vertex set of a product is the Cartesian product of the vertex sets of its factors. Using formulas for vertex degrees, we derived explicit expressions for the corresponding M-polynomials $M(G\ast H;x,y)$ and, whenever possible, obtained compact factorizations in terms of $M(G;x,y)$, $M(H;x,y)$, and the degree polynomials $D_G$ and $D_H$. In particular, the Cartesian and the lexicographic product admit elegant compact forms that reduce the global computation to substitutions and products of lower-dimensional polynomials.

As already mentioned, most papers on the M-polynomial to date focus on specific classes of graphs and specific chemical molecules. However, further research is certainly needed to gain a better understanding of this polynomial in more general terms. This includes investigations of coefficients, roots, and recursive structures. 

From the perspective of our research, it certainly makes sense to further investigate other graph operations. Local operations appear less promising from this perspective, since they typically change the M-polynomial only slightly. On the other hand, there are ``non-product" global operations, such as joins, hierarchical products, corona products, and blow-up graphs, that are interesting from the perspective of the M-polynomial. 

\section*{Data Availability Statement}
 
Data sharing is not applicable to this article as no new data were created or analyzed in this study.

\section*{Conflict of Interest Statement}

The authors declare no conflict of interest.

\section*{Acknowledgments}

Sandi Klav\v{z}ar was supported by the Slovenian Research and Innovation Agency (ARIS) under the grants P1-0297, N1-0285, N1-0355, N1-0431, J1-70045.  

\bibliographystyle{acm} 
\bibliography{biblio}

\end{document}